
\documentclass[11pt]{amsart}
\usepackage{amsmath, amssymb}
\usepackage{amsfonts}
\usepackage{mathrsfs}
\usepackage[arrow,matrix,curve,cmtip,ps]{xy}

\usepackage{epsfig}

\usepackage{amsthm}

\allowdisplaybreaks

\newtheorem{theorem}{Theorem}[section]
\newtheorem{lemma}[theorem]{Lemma}
\newtheorem{proposition}[theorem]{Proposition}

\newtheorem*{theorem*}{Theorem}
\theoremstyle{remark}

\newtheorem{definition}[theorem]{Definition}


\numberwithin{equation}{section}


\newcommand{\N}{\mathbb{N}}

\newcommand{\C}{\mathbb{C}}

\newcommand{\im}{\operatorname{im }}

\newcommand{\Ad}{\operatorname{Ad}}

\newcommand{\clspan}{\operatorname{\overline{\textnormal{span}}}}

\newcommand{\Hi}{\mathcal{H}}

\newcommand{\gae}{\lower 2pt \hbox{$\, \buildrel {\scriptstyle >}\over {\scriptstyle
\sim}\,$}}

\newcommand{\lae}{\lower 2pt \hbox{$\, \buildrel {\scriptstyle <}\over {\scriptstyle
\sim}\,$}}

\newcommand{\MU}[1]{
\setbox0\hbox{$#1$}
\setbox1\hbox{$W$}
\ifdim\wd0>\wd1 #1^{\sim} \else \widetilde{#1} \fi
}


\begin{document}
\title{Naimark's problem for graph $C^*$-algebras}

\author{Nishant Suri and Mark Tomforde 
}

\address{Department of Mathematics \\ University of Houston \\ Houston, TX 77204-3008 \\USA}
\email{nsuri@math.uh.edu}
\email{tomforde@math.uh.edu}

\thanks{This work was supported by a grant from the Simons Foundation (\#527708 to Mark Tomforde)}

\date{\today}

\subjclass[2010]{46L55}

\keywords{$C^*$-algebras, Naimark's problem, irreducible representations, independence questions}

\begin{abstract}

Naimark's problem asks whether a $C^*$-algebra that has only one irreducible $*$-representation up to unitary equivalence is isomorphic to the $C^*$-algebra of compact operators on some (not necessarily separable) Hilbert space.  This problem has been solved in special cases, including separable $C^*$-algebras and Type~I $C^*$-algebras.  However,  in 2004 Akemann and Weaver used the diamond principle to construct a $C^*$-algebra with $\aleph_1$  generators that is a counterexample to Naimark's Problem.  More precisely, they showed that the statement ``There exists a counterexample to Naimark's Problem that is generated by $\aleph_1$  elements." is independent of the axioms of ZFC.  Whether Naimark's problem itself is independent of ZFC remains unknown.  In this paper we examine Naimark's problem in the setting of graph $C^*$-algebras, and show that it has an affirmative answer for (not necessarily separable) AF graph $C^*$-algebras as well as for $C^*$-algebras of graphs in which each vertex emits a countable number of edges.

\end{abstract}

\maketitle

\section{Introduction}

In 1948 Naimark proved that any two irreducible representations of $K(\Hi)$ are unitarily equivalent \cite{Nai48}, and in 1951 he asked whether this property characterizes $K(\Hi)$ up to isomorphism \cite{Nai51}.  More precisely, Naimark asks the following: If $A$ is a $C^*$-algebra with only one irreducible representation up to unitary equivalence, is $A$ isomorphic to $K(\Hi)$ for some (not necessarily separable) Hilbert space $\Hi$?  This question became known as \emph{Naimark's problem}, and a hypothetical $C^*$-algebra satisfying the premise of this question but not its conclusion is called a \emph{counterexample to Naimark's problem.}

There are several perspectives with which one can view the significance of Naimark's problem.  Classically, as representations of $C^*$-algebras were studied extensively throughout the 1940s and 1950s, researchers asked to what extent the isomorphism class of a $C^*$-algebra is determined by its representation theory.  Naimark's problem may be viewed as the simplest case of this question.  Due to the GNS construction, any $C^*$-algebra has a (nonzero) irreducible representation, and hence the most basic representation theory possible for a $C^*$-algebra is when any two irreducible representations are unitarily equivalent (in other words, up to a change of Hilbert space coordinates, the $C^*$-algebra has a unique irreducible representation).  Correspondingly, the most basic $C^*$-algebra one can imagine is $K(\Hi)$ for some Hilbert space $\Hi$, and accordingly such $C^*$-algebras are called \emph{elementary $C^*$-algebras}.   Thus Naimark's problem is asking whether a $C^*$-algebra with the most basic possible representation theory must be isomorphic to the most elementary of $C^*$-algebras.


From the modern standpoint, one may also view Naimark's problem as an early inquiry into the classification of $C^*$-algebras --- one that predates the first steps of Elliott's classification program by 25 years.  Indeed, in modern language, Naimark's question is tantamount to asking whether a (not necessarily separable) $C^*$-algebra that is Morita equivalent to the compact operators on some Hilbert space must be isomorphic to $K(\Hi)$ for some (not necessarily separable) Hilbert space $\Hi$.

In the years following Naimark's proposal of the problem, various partial solutions were obtained.  In 1951, almost immediately after the problem was posed, Kaplansky showed that Naimark's question has an affirmative answer for GCR $C^*$-algebras (today more commonly known as \emph{Type I $C^*$-algebras}) \cite[Theorem~7.3]{Kap}.  Two years later, in 1953, A.~Rosenberg proved that Naimark's problem has an affirmative answer for separable $C^*$-algebras \cite[Theorem~4]{Ros}.  In 1960 Fell, building off ideas of Kaplansky, showed that any two irreducible representations of a Type I $C^*$-algebra with equal kernels must be unitarily equivalent~\cite{Fell60}. That same year, Dixmier proved a partial converse: a \emph{separable} $C^*$-algebra that is not Type I necessarily has unitarily \emph{inequivalent} representations whose kernels are equal~\cite{Dix60}. (In fact, in 1961 Glimm showed that a separable $C^*$-algebra that is not Type I has \emph{uncountably many} inequivalent irreducible representations~\cite{Glimm61}.) Dixmier's result, combined with Kaplansky's affirmative answer to Naimark's problem for Type~I $C^*$-algebras, recovered A.~Rosenberg's 1953 result. 

Despite this surge of activity in the years immediately following the question's proposal, very little progress was accomplished on Naimark's problem over the following 40 years.  The next major accomplishment came in 2004 when Akemann and Weaver used Jensen's $\diamondsuit$~axiom (pronounced ``diamond axiom"), a combinatorial principle known to be independent of ZFC, to construct a counterexample to Naimark's problem that is generated by $\aleph_1$ elements~\cite{AW04}.  This shows that, at the very least, one cannot prove an affirmative answer to Naimark's problem within ZFC alone.  In fact, Akemann and Weaver showed more: they proved that the existence of an $\aleph_1$-generated counterexample is independent of ZFC.  Whether Naimark's problem itself is independent of ZFC remains unknown.

Akemann and Weaver's result suggests that there are set-theoretic obstructions to obtaining an answer to Naimark's problem in its most general form. In light of this, it is reasonable to consider restrictions of the problem to particular types of $C^*$-algebras and to ask whether there is an affirmative answer to the problem for certain subclasses of $C^*$-algebras.  We do so in this paper for the class of graph $C^*$-algebras.  In particular, we show that Naimark's problem has an affirmative answer for (not necessarily separable) AF graph $C^*$-algebras as well as for $C^*$-algebras of graphs in which each vertex emits a countable number of edges.

It is an elementary result that a $C^*$-algebra with a unique representation up to unitary equivalence must be simple.  (See Lemma~\ref{Naimark-implies-simple-lem} of this paper for a proof.)  There is also a well-known dichotomy for simple graph $C^*$-algebras:  If the $C^*$-algebra of a (not necessarily countable) graph is simple, then the $C^*$-algebra is either AF or purely infinite.  Consequently, the results of this paper are close to establishing an answer to Naimark's problem for all graph $C^*$-algebras.  Specifically, our results show that if a graph $C^*$-algebra counterexample to Naimark's problem exists, it must be a simple purely infinite $C^*$-algebra of a graph containing a vertex that emits an uncountable numebr of edges.  Unfortunately, we are unable to determine if such a counterexample exists within the class of graph $C^*$-algebras.  Indeed, at the time of this writing it is unknown whether it is possible for any simple purely infinite $C^*$-algebra to be a counterexample to Naimark's problem.

\smallskip

\noindent \textsc{Convention:} We use the term AF-algebra to mean a $C^*$-algebra that is a direct limit of a directed system (not necessarily a directed sequence) of finite-dimensional $C^*$-algebras.  In particular, this allows for AF-algebras that are non-separable.

\smallskip

\noindent \textsc{Countable and Uncountable Graphs:}  It is fairly standard for papers on graph $C^*$-algebras to impose the standing hypothesis that all graphs are countable, despite the fact this hypothesis may not be explicitly stated in individual results.  Countability of the graph ensures that the associated $C^*$-algebra is separable, which is a common hypothesis imposed in $C^*$-algebra theory.  While separability is needed in a small number of graph $C^*$-algebra theorems (e.g., to apply $K$-theory classification), for most results it is unnecessary.  Consequently, when working with uncountable graphs, one must often go through proofs of individual results (and the results they reference) to determine whether the countability of the graph is needed, or even used.  In this paper we will need to apply four well-known results proven in papers where the graphs were assumed to be countable: (1) the simplicity of a graph $C^*$-algebra is equivalent to the graph being cofinal and satisfying Condition~(L) (which is also equivalent to the graph  having no proper nontrivial saturated hereditary subsets and satisfying Condition~(L)); proven in \cite[Theorem~4]{Pat} and \cite[Theorem~12]{Szy}), (2) a graph $C^*$-algebra is a limit of finite-dimensional $C^*$-algebras if and only if the graph has no cycles; proven in \cite[\S5.4]{RS}, (3) if $E$ is a graph and $H$ is a hereditary subset of $E$, then $C^*(E_H)$ is Morita equivalent to $I_H$; proven in \cite[Proposition~3.4]{BHRS}, and (4) the Cuntz-Krieger Uniqueness Theorem for relative graph $C^*$-algebras; proven in \cite[Theorem~3.11]{MT}.  In all four of these results the countability hypothesis is unnecessary, and the same proofs go through for uncountable graphs.

\section{Preliminaries}

\begin{definition}
A \emph{representation of $A$} is a $*$-algebra homomorphism $\pi : A \to B(\Hi)$.  A subspace $\mathcal{V} \subseteq \Hi$ is \emph{invariant (for $\pi$)} if $\pi(a)\mathcal{V} \subseteq \mathcal{V}$ for all $a \in A$.  A representation is \textit{irreducible} if its only closed invariant subspaces are $\{ 0 \}$ and $\Hi$.
\end{definition}

\begin{definition}
Two representations $\pi : A \to B(\Hi_\pi)$ and $\rho : A \to B(\Hi_\rho)$ of $A$ are \textit{unitarily equivalent}, denoted $\pi \sim_u \rho$, if there is a unitary operator $U: \Hi_\pi \rightarrow \Hi_\rho$ 
such that $\pi(a)= U^* \rho(a) U$ for all $a \in A.$ 
\end{definition}

It is straightforward to verify that unitary equivalence of representations is an equivalence relation.  In addition, it follows from the GNS construction that every $C^*$-algebra has a nonzero irreducible representation.  (See \cite[Theorem~A.14, p.210] {RW} for a statement and proof.)

\begin{definition}
We say a $C^*$-algebra \emph{has a unique irreducible representation up to unitary equivalence} if any two irreducible representations of the $C^*$-algebra are unitarily equivalent.
\end{definition}

Note that since every $C^*$-algebra has a nonzero irreducible representation, there is no need to hypothesize the existence of an irreducible representation in the above definition.

An ideal $I \triangleleft A$ is called a \emph{primitive ideal} if $I = \ker \pi$ for some irreducible representation $\pi : A \to B(\Hi)$.  One can easily see that if $\pi$ and $\rho$ are unitarily equivalent representations, then $\ker \pi = \ker \rho$.  Thus a $C^*$-algebra with a unique irreducible representation up to unitary equivalence has a unique primitive ideal.  The following straightforward lemma shows that in this case the primitive ideal is zero, and moreover any such $C^*$-algebra is simple.

\begin{lemma} \label{Naimark-implies-simple-lem}
If $A$ is a $C^*$-algebra with a unique irreducible representation up to unitary equivalence, then $A$ is simple.
\end{lemma}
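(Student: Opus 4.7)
The plan is to deduce simplicity in three short steps: extract a single primitive ideal $J$ from the hypothesis, show $J = \{0\}$, and then show that every proper closed two-sided ideal of $A$ sits inside a primitive ideal and therefore vanishes.

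First, the GNS construction guarantees that $A$ has a nonzero irreducible representation, so $A$ has at least one primitive ideal. As noted just before the lemma, unitarily equivalent representations have the same kernel, so the unique unitary equivalence class of irreducible representations of $A$ yields a single primitive ideal $J$. Next, I invoke the standard semisimplicity of $C^*$-algebras: the intersection of all primitive ideals is $\{0\}$. (Briefly: for any nonzero $a \in A$ there is a pure state $\varphi$ with $\varphi(a^*a) > 0$, and the associated GNS representation $\pi_\varphi$ is irreducible with $\pi_\varphi(a) \neq 0$, so $a$ lies outside the primitive ideal $\ker \pi_\varphi$.) With $J$ the only primitive ideal, this intersection collapses to $J$, forcing $J = \{0\}$.

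Finally, let $I$ be a proper closed two-sided ideal of $A$. Then $A/I$ is a nonzero $C^*$-algebra, so by GNS it admits a nonzero irreducible representation $\pi : A/I \to B(\Hi)$. Composing with the quotient map $q : A \to A/I$ yields an irreducible representation $\pi \circ q$ of $A$ whose kernel contains $I$, so $I \subseteq \ker(\pi \circ q)$. Since $\ker(\pi \circ q)$ is a primitive ideal and the only primitive ideal is $J = \{0\}$, we conclude $I = \{0\}$. Hence $A$ has no nontrivial proper closed two-sided ideals and is simple. I do not anticipate a real obstacle here: the only non-elementary ingredient is the semisimplicity of $C^*$-algebras, a classical consequence of the pure-state version of GNS; everything else is routine bookkeeping with quotient representations.
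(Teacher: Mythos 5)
Your proof is correct and follows essentially the same route as the paper's: the paper cites the single fact that every closed proper ideal equals the intersection of the primitive ideals containing it, and then observes that with only one primitive ideal the proper closed ideal $\{0\}$ forces everything to collapse to zero. You merely unpack that citation into its two ingredients --- semisimplicity of $C^*$-algebras and the existence of a primitive ideal above any proper closed ideal (via the quotient $A/I$) --- so the argument is the same, just made self-contained.
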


\begin{proof}
It is a standard result that every closed proper ideal is equal to the intersection of all primitive ideals containing it.  (See \cite[Proposition~A.17, p.212]{RW} for a statement and self-contained proof of this result.)  Since $A$ has only one irreducible representation up to unitary equivalence, $A$ has a unique primitive ideal $I$.  Thus every closed proper ideal of $A$ must equal $I$.  Since $\{ 0 \}$ is a closed proper ideal of $A$, it follows that any closed proper ideal of $A$ is equal to $\{ 0 \}$.  In particular, $A$ is simple.
\end{proof}

A \emph{graph} $E: = (E^0, E^1, r, s)$ consists of a set of vertices $E^0$, a set of edges $E^1$, and maps $r: E^1 \to E^0$ and $s : E^1 \to E^0$ identifying the range and sources of each edge.  Throughout this paper we do not make any assumptions of the cardinality of our graphs, and in particular, we do not require the vertex or edge sets of our graphs to be finite or countable. 

A vertex $v \in E^0$ is a \emph{sink} if $s^{-1}(v) = \emptyset$.  A vertex $v \in E^0$ is an \emph{infinite emitter} if $s^{-1}(v)$ is infinite.  A \emph{singular vertex} is a vertex that is either a sink or an infinite emitter.  A \emph{regular vertex} is a vertex that is not a singular vertex; equivalently: a vertex $v$ is regular if and only if $s^{-1}(v)$ is a finite and nonempty set.  A graph is called \emph{row-finite} if for every $v \in E^0$ the set $s^{-1}(v)$ is finite (and possibly empty).  A graph is called \emph{row-countable} if for every $v \in E^0$ the set $s^{-1}(v)$ is countable (and possibly empty).  A graph $E = (E^0, E^1, r, s)$ is \emph{finite} if both $E^0$ and $E^1$ are finite sets.  A graph $E = (E^0, E^1, r, s)$ is \emph{countable} if both $E^0$ and $E^1$ are countable sets.

A \emph{path} $e_1 \ldots e_n$ in a graph $E$ consists of a finite list of edges $e_1, \ldots, e_n \in E^1$ satisfying $r(e_i) = s(e_{i+1})$ for all $1 \leq i \leq n-1$, and we say such a path has length $|\alpha| := n$.  We consider vertices to be paths of length zero, and edges to be paths of length one.  We also let $E^n$ denote the set of paths of $E$ of length $n$, and the let $E^* := \bigcup_{n=0}^\infty E^n$ denote the set of all paths of $E$.  We extend $r$ and $s$ to $E^*$ in the obvious way: If $\alpha := e_1 \ldots e_n \in E^*$, then $s(\alpha) := s(e_1)$ and $r(\alpha) := r(e_n)$.

An \emph{infinite path} $e_1 e_2 \ldots$ consists of a sequence of edges $e_1, e_2, \ldots \in E^1$ with $r(e_i) = s(e_{i+1})$ for all $i \in \N$.  We let $E^\infty$ denote the set of all infinite paths in $E$, and we extend the map $s$ to $E^\infty$ in the obvious way: If $\alpha := e_1  e_2 \ldots \in E^\infty$, then $s(\alpha) := s(e_1)$.

A \emph{cycle} is a path $\alpha \in E^*$ such that $s(\alpha) = r(\alpha)$.  If $\alpha := e_1 \ldots e_n$ is a cycle, an \emph{exit} for $\alpha$ is an edge $f \in E^1$ such that $s(f) = s(e_i)$ and $f \neq e_i$ for some $1 \leq i \leq n$.  A graph is said to satisfying Condition~(L) if every cycle in $E$ has an exit.

If $v, w \in E^0$, we say \emph{$v$ can reach $w$}, written $v \geq w$, if there exists a path $\alpha \in E^*$ with $s(\alpha) = v$ and $r(\alpha) = w$.  A graph is called \emph{cofinal} if whenever $v \in E^0$ and $\alpha := e_1 e_2 \ldots \in E^\infty$, then $v \geq s(e_i)$ for some $i \in \N$.

A subset $H \subset E^0$ is called \emph{hereditary} if whenever $e \in E^1$ and $s(e) \in H$, then $r(e) \in H$.  A hereditary subset $H$ is called \emph{saturated} if whenever $v$ is a regular vertex and $r(s^{-1}(v)) \subseteq H$, then $v \in H$.

\begin{definition}
If $E := (E^0, E^1, r, s)$ is a graph, a \emph{Cuntz-Krieger $E$-family} is a collection of elements $\{ s_e, p_v : e \in E^1, v \in E^0 \}$ in a $C^*$-algebra such that $\{ p_v : v \in E^0 \}$ is a collection of mutually orthogonal projections and $\{ s_e : e \in E^1 \}$ is a collection of partial isometries with mutually orthogonal ranges satisfying  the \emph{Cuntz-Krieger relations}:
\begin{itemize}
\item[(CK1)] $s_e^* s_e = p_{r(e)}$ for all $e \in E^1$,
\item[(CK2)] $s_e s_e^* \leq p_{s(e)} $ for all $e \in E^1$, and 
\item[(CK3)] $p_v = \sum_{s(e)=v} s_es_e^*$ whenever $v \in E^0$ is a regular vertex.
\end{itemize}
The \emph{graph $C^*$-algebra} $C^*(E)$ is the universal $C^*$-algebra generated by a Cuntz-Krieger $E$-family.
\end{definition}

If $\alpha := e_1 \ldots e_n \in E^*$, we define $s_\alpha := s_{e_1} \ldots s_{e_n}$, and when $\alpha = v \in E^0$ we interpret this as $s_\alpha := p_v$.  One can use the Cuntz-Krieger relations to show that $C^*(E) = \clspan \{ s_\alpha s_\beta^* : \alpha, \beta \in E^* \}$.  Moreover, $C^*(E)$ is separable if and only if $E$ is a countable graph.  Indeed, when $E$ is countable 
$$\left\{ \sum_{k=1}^n (a_k + i b_k) s_{\alpha_k} s_{\beta_k}^* : n \in \N \text{ and } \alpha_k, \beta_k \in E^*,  a_k, b_k \in \mathbb{Q} \text{ for } 1 \leq k \leq n \right\}$$
is a countable dense subset of $C^*(E)$.

There are numerous results relating the structure of $E$ to the structure of $C^*(E)$, and we state (with reference) a few that we shall use throughout this paper.

\begin{theorem}  \label{simple-theorem}
(\cite[Theorem~4]{Pat}, \cite[Theorem~12]{Szy})
Let $E$ be a directed graph.  Then the following three conditions are equivalent:
\begin{itemize}
\item[(1)] $C^*(E)$ is simple.
\item[(2)] $E$ satisfies Condition~(L), $E$ is cofinal, and every vertex of $E$ can reach every singular vertex of $E$.
\item[(3)] $E$ satisfies Condition~(L) and the only saturated hereditary subsets of $E$ are $E^0$ and 
$\emptyset$.
\end{itemize}
\end{theorem}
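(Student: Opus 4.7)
The plan is to prove (1) $\Leftrightarrow$ (3) via the standard ideal-theoretic correspondence together with the Cuntz-Krieger Uniqueness Theorem, and then to establish (2) $\Leftrightarrow$ (3) through a combinatorial analysis of saturated hereditary subsets. Neither direction requires countability of $E$, which is the reason the result transfers to the uncountable setting used throughout this paper.

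For (3) $\Rightarrow$ (1), I would take a nonzero closed ideal $I \triangleleft C^*(E)$ and consider $H_I := \{v \in E^0 : p_v \in I\}$. Using $p_v s_e = s_e$ when $s(e) = v$, (CK1) gives $s_e^* p_v s_e = p_{r(e)}$, so $H_I$ is hereditary; (CK3) gives $p_v = \sum_{s(e)=v} s_e s_e^*$, so $H_I$ is saturated. Hypothesis (3) forces $H_I \in \{\emptyset, E^0\}$. If $H_I = E^0$, then every $s_\alpha s_\beta^* = s_\alpha p_{r(\beta)} s_\beta^*$ lies in $I$, so $I = C^*(E)$. If $H_I = \emptyset$, the quotient $q : C^*(E) \to C^*(E)/I$ sends each $p_v$ to a nonzero projection, and because $E$ satisfies Condition~(L), the Cuntz-Krieger Uniqueness Theorem forces $q$ to be injective, contradicting $I \neq 0$. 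For (1) $\Rightarrow$ (3), simplicity rules out any proper nonempty saturated hereditary set $H$, since the ideal $I_H$ generated by $\{p_v : v \in H\}$ would be proper (the quotient by $I_H$ is nontrivial because $C^*(E \setminus H)$ receives the universal map sending the missing projections to zero) and nonzero. For Condition~(L), if a cycle $\alpha$ at $v$ had no exit, $s_\alpha$ would be a unitary in the corner $p_v C^*(E) p_v$, and the gauge action $\gamma_z(s_\alpha) = z^{|\alpha|} s_\alpha$ together with universality would produce for each $\lambda \in \T$ a nonzero representation whose value on $s_\alpha$ is determined by $\lambda$; distinct $\lambda$ yield distinct primitive ideals, contradicting simplicity.

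For (2) $\Leftrightarrow$ (3) I would argue combinatorially. Assuming (3), if some singular vertex $w$ is not reached from a vertex $v$, the set $H := \{u \in E^0 : u \not\geq w\}$ is nonempty (it contains $v$), proper (it misses $w$), clearly hereditary, and saturated because a regular vertex $u \notin H$ satisfies $u \geq w$ via some path of positive length (since $u$ is regular and $w$ is singular, $u \neq w$), whose initial edge witnesses $r(s^{-1}(u)) \not\subseteq H$. The analogous argument with $H := \{u : u \not\geq s(e_i) \text{ for all } i\}$ handles failure of cofinality along an infinite path $e_1 e_2 \ldots$. Conversely, assuming (2), let $H$ be a proper nonempty saturated hereditary subset and pick $v_0 \in E^0 \setminus H$ and $u_0 \in H$; by saturation, $v_0$ emits an edge to some $v_1 \in E^0 \setminus H$, and iterating either terminates at a singular vertex $w \in E^0 \setminus H$ (so $u_0 \geq w$ by (2) implies $w \in H$ by heredity, a contradiction) or produces an infinite path in $E^0 \setminus H$ (so cofinality applied to $u_0$ yields $s(e_i) \in H$ for some $i$, again a contradiction).

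The main obstacle is the case $H_I = \emptyset$ in (3) $\Rightarrow$ (1): one must extract genuine analytic information (injectivity of $q$) from a purely combinatorial hypothesis (Condition~(L) plus nonvanishing on vertex projections), and this is exactly what the Cuntz-Krieger Uniqueness Theorem provides. A secondary difficulty is the construction of a nontrivial ideal from a cycle without exit in (1) $\Rightarrow$ (3); the gauge-action argument sketched above is the cleanest route, but one must verify that the resulting representations do not collapse to the same kernel, which is where unitarity of $s_\alpha$ in the corner is essential.
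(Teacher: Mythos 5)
The paper offers no proof of this theorem: it is imported from \cite[Theorem~4]{Pat} and \cite[Theorem~12]{Szy} with only the remark that the countability hypothesis in those sources is unnecessary, so there is no in-paper argument to compare against. Your proposal is the standard proof of this characterization and is essentially correct: the correspondence $I \mapsto H_I$ together with the Cuntz--Krieger uniqueness theorem handles $(1)\Leftrightarrow(3)$, and the explicit saturated hereditary sets $\{u : u \not\geq w\}$ and the ``walk outside $H$ until you hit a singular vertex or build an infinite path'' argument handle $(2)\Leftrightarrow(3)$; none of it uses countability. Two points are worth tightening. First, in $(2)\Rightarrow(3)$ the clause ``by saturation, $v_0$ emits an edge to some $v_1 \in E^0\setminus H$'' is only valid when $v_0$ is regular --- saturation says nothing at sinks or infinite emitters --- so the case split ``current vertex is singular'' must come \emph{before} that step, not after; your parenthetical does absorb this situation, but as written the logic is in the wrong order. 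Second, the gauge-action argument for Condition~(L) in $(1)\Rightarrow(3)$ is the vaguest step: to get distinct primitive ideals from the characters $s_\alpha \mapsto \lambda$ one must induce representations from the corner $p_v C^*(E) p_v$ and verify the corner is full. A cleaner route, consistent with the tools already quoted in this paper, is to observe that the vertices of an exitless cycle form a hereditary set $C$ with $E_C$ a single loop, so $C^*(E_C) \cong M_n(C(\T))$ is not simple, while Theorem~\ref{hereditary-restriction-thm} and simplicity of $C^*(E)$ would force $C^*(E)$ to be Morita equivalent to $C^*(E_C)$ --- a contradiction, since Morita equivalence preserves simplicity.
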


We mention that while the papers \cite{Pat, Szy} impose a standing hypothesis that the graphs they consider are countable, this hypothesis in unnecessary for their proofs of the results stated in Theorem~\ref{simple-theorem}.

In this paper, we call a $C^*$-algebra an \emph{AF-algebra}, or say the $C^*$-algebra is \emph{AF} (short for approximately finite-dimensional), if it is the direct limit of finite-dimensional $C^*$-algebras.  Note that this differs from the standard usage of the term, which typically requires a sequential direct limit and therefore implies the limit is separable.  Our notion of AF coincides with the usual definition for separable $C^*$-algebras, but also allows for nonseparable AF-algebras, which are direct limits of general directed families of finite-dimensional $C^*$-algebras.  The following result gives a very nice characterization of AF graph $C^*$-algebras.

\begin{theorem}  \label{AF-iff-no-cycles-thm}
(\cite[\S5.4]{RS})
If $E$ is a graph, then $C^*(E)$ is AF if and only if $E$ has no cycles.
\end{theorem}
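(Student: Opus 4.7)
The plan is to prove each implication directly, taking care that the arguments remain valid in the possibly non-separable setting and using only properties already available from the Cuntz--Krieger relations.

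For the forward direction ``$E$ has no cycles $\Rightarrow$ $C^*(E)$ is AF,'' I would exhibit $C^*(E)$ as a directed limit of finite-dimensional $C^*$-subalgebras indexed by finite sub-data of $E$. Let $\mathcal{D}$ denote the directed set of pairs $(V,F)$ with $V \subseteq E^0$ finite, $F \subseteq E^1$ finite, and $s(F) \cup r(F) \subseteq V$, ordered by componentwise inclusion. For each such $(V,F)$, let $A_{V,F} \subseteq C^*(E)$ denote the $C^*$-subalgebra generated by $\{p_v : v \in V\} \cup \{s_e : e \in F\}$. Using (CK1) and (CK2), every word in these generators reduces to a linear combination of terms $s_\alpha s_\beta^*$ where $\alpha, \beta$ are paths built from edges in $F$; because $E$ is acyclic and $F$ is finite, only finitely many such paths exist, so $A_{V,F}$ is finite-dimensional. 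The directed union $\bigcup_{(V,F)} A_{V,F}$ contains every generator of $C^*(E)$ and is therefore dense, identifying $C^*(E) = \varinjlim A_{V,F}$ as AF in the generalized sense of our convention.

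For the converse, I would argue the contrapositive: assuming $E$ contains a cycle $\alpha = e_1 \cdots e_n$ based at $v$, I show that the corner $B := p_v C^*(E) p_v$ is not AF, which suffices since corners of AF algebras are AF. Two cases arise. If $\alpha$ has no exit, iterated use of (CK3) at each $s(e_i)$ gives $s_{e_i} s_{e_i}^* = p_{s(e_i)}$, so $s_\alpha s_\alpha^* = p_v$ and $s_\alpha$ is a unitary in $B$. A direct computation, using that every path with source $v$ is a prefix of $\alpha\alpha\alpha \cdots$, yields $B = C^*(s_\alpha, p_v)$; the gauge action $\beta_z(s_\alpha) = z^n s_\alpha$ then forces the spectrum of $s_\alpha$ to be rotation-invariant in $\T$, hence equal to $\T$, so $B \cong C(\T)$. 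This is not AF, since $K_1(C(\T)) = \Z$ while $K$-theory commutes with directed limits and finite-dimensional algebras have trivial $K_1$. If instead $\alpha$ has an exit $f \neq e_i$ at some $s(e_i)$, then orthogonality of ranges of distinct edges gives $s_{e_i} s_{e_i}^* + s_f s_f^* \leq p_{s(e_i)}$ with $s_f s_f^* \neq 0$, whence $s_{e_i} s_{e_i}^* < p_{s(e_i)}$; propagating this strict inequality through the remaining partial isometries of the cycle yields $s_\alpha s_\alpha^* < p_v$, and combined with $s_\alpha^* s_\alpha = p_v$ this exhibits $p_v$ as an infinite projection in $B$, contradicting that AF algebras are stably finite.

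The main technical step to verify is that the two inheritance facts invoked---``a corner of a generalized AF algebra is AF'' and ``generalized AF algebras are stably finite''---remain valid for directed, rather than merely sequential, limits of finite-dimensional $C^*$-algebras. In each case the standard sequential-AF argument adapts verbatim: given $A = \varinjlim_\lambda A_\lambda$ and a projection $p \in A$, a functional-calculus perturbation replaces $p$ by a projection $p'$ in some $A_{\lambda_0}$ unitarily equivalent to $p$, after which $pAp \cong \varinjlim_{\lambda \geq \lambda_0} p' A_\lambda p'$ presents the corner as a directed limit of finite-dimensional corners; stable finiteness transfers similarly because any Murray--von Neumann equivalence witness can be approximated inside some $A_\lambda$, where the finite-dimensional conclusion is automatic.
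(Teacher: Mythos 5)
The paper does not prove this theorem itself; it cites \cite[\S5.4]{RS} and merely remarks that the countability hypothesis there is unnecessary, and your argument is correct and is essentially the standard one from that source: finite-dimensionality of the subalgebras $\clspan\{s_\alpha s_\beta^* \}$ over finitely many edges for the forward direction, and, given a cycle, either a corner isomorphic to $C(\T)$ (no exit) or an infinite projection $p_v \sim s_\alpha s_\alpha^* \lneq p_v$ (with exit) for the converse. Your indexing by the directed set of finite vertex--edge pairs, together with the closing paragraph checking that hereditary-subalgebra and stable-finiteness arguments survive non-sequential limits, is precisely the adaptation to uncountable graphs that the paper's remark asserts is routine (the only nitpick being that in the no-exit case you should take $\alpha$ to be a cycle of minimal length at $v$, so that $B = C^*(s_\alpha, p_v)$ rather than a possibly larger algebra).
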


Although this result in \cite[\S5.4]{RS} is stated and proven for countable graphs, the same proof works without the countability hypothesis, showing that $C^*(E)$ is a (not necessarily sequential) direct limit of finite-dimensional $C^*$-algebras if and only if $E$ has no cycles.

If $E = (E^0, E^1, r_E, s_E)$ is a graph, a \emph{subgraph} of $E$ is a graph $F = (F^0, F^1, r_F, s_F)$ such that $F^0 \subseteq E^0$, $F^1 \subseteq E^*$, and $r_F$ and $s_F$ are restrictions of the range and source maps that $r_E$ and $s_E$ induce on $E^*$.

If $H$ is a hereditary subset (that is not necessarily saturated) of the graph $E = (E^0, E^1, r, s)$, the \emph{restriction of $E$ to $H$} is the graph $E_H := ( E_H^0, E_H^1, r_H, s_H)$ with vertex set $E_H^0 := H$, edge set $E_H^1 := s^{-1}(H)$, and range and source maps $r_H := r|_{E_H^1}$ and $s_H := s|_{E_H^1}$.  In addition, we let $I_H$ denote the closed two-sided ideal in $C^*(E)$ generated by $\{ p_v : v \in H \}$.

\begin{theorem} \label{hereditary-restriction-thm} 
(\cite[Proposition~3.4]{BHRS})
If $E$ is a graph and $H$ is a hereditary subset of $E$, then $C^*(E_H)$ is Morita equivalent to $I_H$.
\end{theorem}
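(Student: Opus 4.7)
The plan is to realize $C^*(E_H)$ as a full corner of $I_H$, from which Morita equivalence follows by the standard corner-equivalence theorem. The corner will be cut down by the multiplier projection $P_H := \sum_{v \in H} p_v$, viewed as a strict limit in $M(I_H)$.

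First I would check that the subset $\{s_e, p_v : e \in E_H^1,\, v \in H\}$ of $C^*(E)$ forms a Cuntz-Krieger $E_H$-family. Relations (CK1) and (CK2), together with the mutual orthogonality of the projections and of the ranges of the partial isometries, are inherited from the universal family generating $C^*(E)$. For (CK3), the key observation is that heredity of $H$ combined with the definition $E_H^1 = s_E^{-1}(H)$ gives $s_{E_H}^{-1}(v) = s_E^{-1}(v)$ for every $v \in H$, so a vertex of $H$ is regular in $E_H$ if and only if it is regular in $E$, and (CK3) for $E_H$ at such a vertex coincides with (CK3) for $E$ at that vertex. The universal property of $C^*(E_H)$ then produces a $*$-homomorphism $\phi : C^*(E_H) \to C^*(E)$ whose image is contained in $I_H$ (since $p_v \in I_H$ for each $v \in H$, and $s_e = s_e p_{r(e)} \in I_H$ for each $e \in E_H^1$ by heredity).

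Next I would establish injectivity of $\phi$ and identify its image as a corner. For injectivity, $\phi$ is equivariant for the canonical gauge actions on $C^*(E_H)$ and on $C^*(E)$, and each generating projection $p_v$ with $v \in H$ is sent to a nonzero projection in $C^*(E)$; the gauge-invariant uniqueness theorem for graph $C^*$-algebras (whose proof goes through without any countability hypothesis) then forces $\phi$ to be injective. A direct computation using (CK1)--(CK3) gives
\[ I_H = \clspan\{s_\alpha s_\beta^* : \alpha, \beta \in E^*,\ r(\alpha) = r(\beta) \in H\}. \]
Cutting down on both sides by $P_H$ forces $s(\alpha), s(\beta) \in H$; since $H$ is hereditary, the paths $\alpha$ and $\beta$ then lie entirely in $E_H$, and hence $P_H\, I_H\, P_H = \phi(C^*(E_H))$.

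Finally, fullness of $P_H$ inside $I_H$ is immediate from the spanning description: any generator $s_\alpha s_\beta^*$ with $r(\alpha) = r(\beta) = w \in H$ factors as $(s_\alpha p_w)(p_w s_\beta^*) \in I_H\, P_H\, I_H$. Thus $\phi$ identifies $C^*(E_H)$ with a full corner of $I_H$, giving the desired Morita equivalence. The main technical obstacle I anticipate is the treatment of $P_H$ when $H$ is uncountable: one must justify that $\sum_{v \in H} p_v$ converges strictly in $M(I_H)$ to a well-defined projection (e.g. by showing its finite sub-sums form an approximate unit for $I_H$) and confirm that the uniqueness theorem being invoked really is valid without a countability hypothesis on the graph, a point the authors explicitly flag in their introductory discussion of which classical results transfer to uncountable graphs.
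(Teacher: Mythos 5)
Your argument is correct and is essentially the proof of \cite[Proposition~3.4]{BHRS} that the paper cites without reproducing: realize $\{s_e, p_v : e \in E_H^1, v \in H\}$ as a Cuntz--Krieger $E_H$-family (using heredity to see that regularity and the (CK3) relation agree in $E$ and $E_H$), embed $C^*(E_H)$ via gauge-invariant uniqueness, and identify the image with the full corner $P_H I_H P_H$. Your closing caveats --- strict convergence of $\sum_{v\in H}p_v$ for uncountable $H$ and the validity of the uniqueness theorem without countability --- are exactly the points the authors flag as needing (routine) verification, so nothing is missing.
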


Although the above result is stated for countable graphs in \cite[Proposition~3.4]{BHRS}, the countability hypothesis in unnecessary, and the same proof works for general (not necessarily countable) graphs.

\begin{definition}
(\cite[\S3]{MT})
If $E := (E^0, E^1, r, s)$ is a graph, let $E^0_\textnormal{reg}$ denote the regular vertices of $E$, and let $S \subseteq E^0_\textnormal{reg}$.  A \emph{Cuntz-Krieger $(E,S)$-family} is a collection of elements $\{ s_e, p_v : e \in E^1, v \in E^0 \}$ in a $C^*$-algebra such that $\{ p_v : v \in E^0 \}$ is a collection of mutually orthogonal projections and $\{ s_e : e \in E^1 \}$ is a collection of partial isometries with mutually orthogonal ranges satisfying  the \emph{Cuntz-Krieger relations}:
\begin{itemize}
\item[(CK1)] $s_e^* s_e = p_{r(e)}$ for all $e \in E^1$,
\item[(CK2)] $s_e s_e^* \leq p_{s(e)} $ for all $e \in E^1$, and 
\item[(CK3)] $p_v = \sum_{s(e)=v} s_es_e^*$ whenever $v \in S$.
\end{itemize}
The \emph{relative graph $C^*$-algebra} $C^*(E,S)$ is the $C^*$-algebra generated by a universal Cuntz-Krieger $(E,S)$-family.
\end{definition}

Observe that if $S = E^0_\textnormal{reg}$, then $C^*(E, E^0_\textnormal{reg})$ is exactly the graph $C^*$-algebra $C^*(E)$.  If $S = \emptyset$, then $C^*(E,\emptyset)$ is called the Toeplitz algebra of $E$, and often denoted $TC^*(E)$.

If $C^*(E,S)$ is a relative graph $C^*$-algebra and  $\{ s_e, p_v : e \in E^1, v \in E^0 \}$ is a generating Cuntz-Krieger $(E,S)$-family in $C^*(E,S)$, then for any $v \in E^0_\textnormal{reg} \setminus S$, we call $q_v := p_v - \sum_{s(e) = v} s_es_e^*$ the \emph{gap projection} at $v$.

If $I$ is the ideal generated by $\{ q_v : v \in E^0_\textnormal{reg} \setminus S \}$, then $C^*(E, S) / I \cong C^*(E)$, and hence the graph $C^*$-algebra is a quotient of the relative graph $C^*$-algebra $C^*(E,S)$.

In addition, whenever $E$ is a graph and $S \subseteq E^0_\textnormal{reg}$ there exists a graph $E_S$ such that $C^*(E_S)$ is isomorphic to $C^*(E,S)$.  Thus every relative graph $C^*$-algebra is isomorphic to a graph $C^*$-algebra (of a possibly different graph).  

Furthermore, we have the following Cuntz-Krieger Uniqueness Theorem for relative graph $C^*$-algebras.

\begin{theorem} 
(\cite[Theorem~3.11]{MT})
Let $E$ be a graph, let $S \subseteq E^0_\textnormal{reg}$, and let $\phi : C^*(E,S) \to A$ be a homomorphism from $C^*(E,S)$ into a $C^*$-algebra $A$.  If $\{ s_e, p_v : e \in E^1, v \in E^0 \}$ is a generating Cuntz-Krieger $(E,S)$-family in $C^*(E,S)$ and the following three conditions hold
\begin{itemize}
\item[(1)] $E$ satisfies Condition~(L),
\item[(2)] $\phi (p_v) \neq 0$ for all $v \in E^0$, and
\item[(3)] $\phi (q_v) \neq 0$ for all $v \in E^0_\textnormal{reg} \setminus S$,
\end{itemize}
then $\phi$ is injective.
\end{theorem}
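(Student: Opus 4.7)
The plan is to reduce the statement to the ordinary Cuntz-Krieger Uniqueness Theorem for graph $C^*$-algebras by exploiting the identification $C^*(E,S)\cong C^*(E_S)$ noted in the paragraph preceding the theorem. Concretely, for each $v\in E^0_\textnormal{reg}\setminus S$ I would adjoin a new sink vertex $v'$ and a single new edge $e_v$ with $s(e_v)=v$ and $r(e_v)=v'$, producing the augmented graph $E_S$. In $C^*(E_S)$ the Cuntz-Krieger relation at $v$ becomes
\[
p_v=\sum_{e\in E^1,\,s(e)=v}s_e s_e^*+s_{e_v}s_{e_v}^*,
\]
which identifies $s_{e_v}s_{e_v}^*$ with the gap projection $q_v$; universal-property arguments in both directions then produce the isomorphism $C^*(E,S)\cong C^*(E_S)$.

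Under this identification the given homomorphism $\phi$ corresponds to a homomorphism $\phi':C^*(E_S)\to A$, and I would check that the three hypotheses pass through cleanly. First, each new vertex $v'$ is a sink, so no new cycles are introduced and $E_S$ inherits Condition~(L) from $E$. Second, $\phi'(p_v)\ne 0$ for $v\in E^0$ is exactly hypothesis~(2). Third, the relations $s_{e_v}^*s_{e_v}=p_{v'}$ and $s_{e_v}s_{e_v}^*=q_v$ (carried across the isomorphism) exhibit $p_{v'}$ and $q_v$ as Murray--von Neumann equivalent, so $\phi(q_v)\ne 0$ forces $\phi'(p_{v'})\ne 0$. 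Hence $\phi'$ is nonvanishing on every vertex projection of $E_S$.

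Invoking the ordinary Cuntz-Krieger Uniqueness Theorem for the graph $E_S$ (which is valid for uncountable graphs, as the usual proof via the gauge action and its fixed-point AF subalgebra goes through without change) then yields injectivity of $\phi'$, and hence of $\phi$. The principal obstacle is the first step---producing the isomorphism $C^*(E,S)\cong C^*(E_S)$---because constructing a Cuntz-Krieger $E_S$-family inside $C^*(E,S)$ requires exhibiting projections $p_{v'}$ mutually orthogonal and orthogonal to all existing vertex projections, and this is not immediately available from the gap projections $q_v\le p_v$ themselves. This is typically handled by passing through an intermediate construction (for example, realising $C^*(E,S)$ as a Toeplitz-type algebra of $E_S$, or building a faithful representation on a suitable Fock-type Hilbert space) to secure the required orthogonality; once that identification is firmly in hand, the reduction to the classical uniqueness theorem is routine.
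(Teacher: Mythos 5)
Your overall strategy --- transporting $\phi$ across an isomorphism $C^*(E,S)\cong C^*(E_S)$ and then invoking the ordinary Cuntz--Krieger uniqueness theorem for the graph $E_S$ --- is exactly the route the paper (following \cite{MT}) intends, and the parts about Condition~(L) passing to $E_S$ and about $q_v$ being Murray--von Neumann equivalent to the new vertex projection are fine. The genuine gap is that the graph $E_S$ you construct is the wrong one, and for your graph the isomorphism you need is simply false. Concretely, let $E$ have two vertices $u,v$ and two edges $f,g$ from $u$ to $v$, with $S=\emptyset$. Then $q_u=p_u-s_fs_f^*-s_gs_g^*$ annihilates $p_v$, $s_f$, and $s_g$, so $C^*(E,\emptyset)\cong M_3(\C)\oplus\C$, the $\C$ summand being spanned by $q_u$. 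Your $E_S$ adjoins a sink $u'$ and a single edge $e_u\colon u\to u'$, and the resulting graph algebra is $M_3(\C)\oplus M_2(\C)$, the $M_2$ summand coming from the two paths ($u'$ and $e_u$) that end at the new sink. These algebras are not isomorphic. So the obstruction you flag at the end --- that $q_v\le p_v$ cannot serve as $p_{v'}$ because it fails to be orthogonal to $p_v$ --- is not a technicality to be ``handled by an intermediate construction''; for your choice of $E_S$ it cannot be overcome at all.

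The correct graph $E_S$ (the one used in \cite{MT}) adjoins, for each $v\in E^0_\textnormal{reg}\setminus S$, a new sink $v'$ together with a copy $e'$ of \emph{every edge $e$ of $E$ with $r(e)=v$}, redirected so that $s(e')=s(e)$ and $r(e')=v'$. Inside $C^*(E,S)$ one then sets $Q_{v'}:=q_v$ and $Q_v:=p_v-q_v=\sum_{s(e)=v}s_es_e^*$ for $v\in E^0_\textnormal{reg}\setminus S$, $Q_w:=p_w$ for all other vertices, $T_e:=s_eQ_{r(e)}$ for $e\in E^1$, and $T_{e'}:=s_eq_{r(e)}$ for the new edges. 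Now the orthogonality you were missing holds (since $q_v\perp p_v-q_v$), relation (CK3) at each regular vertex $w$ of $E_S$ reduces to the identity $\sum_{s(e)=w}s_ep_{r(e)}s_e^*=\sum_{s(e)=w}s_es_e^*$, and the family generates $C^*(E,S)$ because $Q_v+Q_{v'}=p_v$ and $T_e+T_{e'}=s_e$. This produces a surjection $\pi\colon C^*(E_S)\to C^*(E,S)$, and applying the ordinary uniqueness theorem to $\phi\circ\pi$ yields injectivity of both $\pi$ and $\phi$. Note also that under this correspondence your claim that hypothesis~(2) transfers ``exactly'' needs adjusting: the vertex projection of $E_S$ at $v\in E^0_\textnormal{reg}\setminus S$ corresponds to $p_v-q_v$, whose nonvanishing under $\phi$ follows from hypothesis~(2) applied to $r(e)$ for any $e\in s^{-1}(v)$, not to $v$ itself.
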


\section{Structure results for graph $C^*$-algebra with a unique irreducible representation}

\begin{lemma} \label{M-2-infinity-graph-lem}

If $F$ is the graph
\begin{equation} \label{M2-graph-eq}
\xymatrix{
w_1 \ar@/^/[r]^{e_1} \ar@/_/[r]_{f_1} & w_2 \ar@/^/[r]^{e_2} \ar@/_/[r]_{f_2} & w_3 \ar@/^/[r]^{e_3} \ar@/_/[r]_{f_3} &  \cdots
}
\end{equation}
then $C^*(F)$ contains a full corner isomorphic to the UHF-algebra $M_{2^\infty}$, and $C^*(F)$ is not a Type~I $C^*$-algebra.
\end{lemma}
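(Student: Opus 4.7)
My plan is to pin down the corner $p_{w_1}C^*(F)p_{w_1}$ explicitly as a direct limit of finite matrix algebras, recognize this limit as $M_{2^\infty}$, and then invoke simplicity of $C^*(F)$ to get fullness of the corner for free. As preliminaries, $F$ has no cycles (so Condition~(L) is vacuous, and Theorem~\ref{AF-iff-no-cycles-thm} already tells us $C^*(F)$ is AF), has no singular vertices (two edges emanate from every $w_i$), and is cofinal since any $w_i$ reaches $w_j$ for all $j\geq i$. Theorem~\ref{simple-theorem} then says $C^*(F)$ is simple.

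Next I would describe the corner. Using $C^*(F)=\clspan\{s_\alpha s_\beta^*\}$ together with the fact that $p_{w_1}s_\alpha s_\beta^* p_{w_1}$ vanishes unless $s(\alpha)=s(\beta)=w_1$ and $r(\alpha)=r(\beta)$, I would conclude
$$p_{w_1}C^*(F)p_{w_1} = \clspan\bigl\{ s_\alpha s_\beta^* : n\geq 0,\ \alpha,\beta\in F^n,\ s(\alpha)=s(\beta)=w_1\bigr\},$$
since once $s(\alpha)=s(\beta)=w_1$ and $|\alpha|=|\beta|=n$, both paths automatically end at $w_{n+1}$. For each $n$ there are exactly $2^n$ paths of length $n$ starting at $w_1$ (two choices $e_i$ or $f_i$ at each step), and by (CK1) the family $\{s_\alpha s_\beta^*\}$ indexed by pairs of such paths is a system of $2^n\times 2^n$ matrix units, spanning a unital subalgebra $A_n\cong M_{2^n}$ of $p_{w_1}C^*(F)p_{w_1}$ with unit $p_{w_1}$.

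To identify the inductive limit, I would apply (CK3) at the regular vertex $w_{n+1}$: for $\alpha',\beta'$ of length $n$ both terminating at $w_{n+1}$,
$$s_{\alpha'}s_{\beta'}^* = s_{\alpha'}p_{w_{n+1}}s_{\beta'}^* = s_{\alpha'}\bigl(s_{e_{n+1}}s_{e_{n+1}}^* + s_{f_{n+1}}s_{f_{n+1}}^*\bigr)s_{\beta'}^* = s_{\alpha'e_{n+1}}s_{\beta'e_{n+1}}^* + s_{\alpha'f_{n+1}}s_{\beta'f_{n+1}}^*,$$
so each matrix unit in $A_n$ decomposes as a sum of two ``diagonal'' matrix units in $A_{n+1}$. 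This is exactly the standard unital embedding $M_{2^n}\hookrightarrow M_{2^{n+1}}$ whose direct limit is $M_{2^\infty}$, and therefore $p_{w_1}C^*(F)p_{w_1}\cong M_{2^\infty}$. Since $C^*(F)$ is simple, $p_{w_1}$ is automatically a full projection, so the corner is full.

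Finally, $M_{2^\infty}$ is simple, unital, and infinite-dimensional, hence not isomorphic to $K(\Hi)$ for any $\Hi$; since simple Type~I $C^*$-algebras must be elementary, $M_{2^\infty}$ is not Type~I, and because Type~I passes to hereditary subalgebras, $C^*(F)$ cannot be Type~I either. The one point I would need to handle with care, and which I expect to be the main obstacle, is verifying that the inclusion $A_n\hookrightarrow A_{n+1}$ really is the standard multiplicity-two unital embedding (rather than some other unital embedding whose direct limit might a priori be a different AF-algebra); once that identification is pinned down by the (CK3) computation above, the remainder of the argument is immediate from the Cuntz-Krieger relations and the results already cited in the preliminaries.
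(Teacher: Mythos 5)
Your proposal is correct and follows essentially the same route as the paper: identify $p_{w_1}C^*(F)p_{w_1}$ as the closed union of the matrix algebras $A_n\cong M_{2^n}$ spanned by the matrix units $s_\alpha s_\beta^*$, and use the (CK3) relation at $w_{n+1}$ to see that each inclusion $A_n\hookrightarrow A_{n+1}$ is the standard multiplicity-two unital embedding, so the corner is $M_{2^\infty}$ and Type~I-ness fails by passing to subalgebras. The only cosmetic differences are that the paper deduces fullness from the observation that any hereditary subset containing $w_1$ is all of $F^0$ (rather than from simplicity of $C^*(F)$) and cites Pedersen directly for $M_{2^\infty}$ not being Type~I, neither of which changes the substance.
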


\begin{proof}
Consider the corner $p_{w_1} C^*(F) p_{w_1}$.  Since $F$ has no cycles, the ideals of $C^*(F)$ are in bijective correspondence with the saturated hereditary subsets of $F$.  Since $p_{w_1} \in p_{w_1} C^*(F) p_{w_1}$, and any hereditary subset containing $w_1$ must equal $F^0$, we may conclude that any ideal containing $p_{w_1} C^*(F) p_{w_1}$ is equal to $C^*(F)$.  Thus $p_{w_1} C^*(F) p_{w_1}$ is a full corner of $C^*(F)$.

If $\{ s_e, p_v : e \in F^1, v \in F^0 \}$ is a generating Cuntz-Krieger $F$-family, then we see that 
$$p_{w_1} C^*(F) p_{w_1} = \clspan \{ s_\alpha s_\beta^* : s(\alpha) = s(\beta) = w_1 \text{ and } r(\alpha) = r(\beta) \}.$$
For each $n \in \N \cup \{ 0 \}$, define 
$$E_n := \{ \alpha \in F^* : s(\alpha) = w_1 \text{ and } r(\alpha) = w_n \}$$
and let 
$$A_n := \clspan \{ s_\alpha s_\beta^* : \alpha, \beta \in E_n \}.$$  Then we see that each $A_n$ is a $C^*$-subalgebra of $C^*(F)$, $A_0 \subseteq A_1 \subseteq A_2 \subseteq \ldots$, and 
$$C^*(F) = \overline{\bigcup_{n=0}^\infty A_n}.$$
For each $n \in \N \cup \{ 0 \}$, we see that $\{ s_\alpha s_\beta^* : \alpha,\beta \in E_n\}$ is a set of matrix units, and since $|E_n| = 2^n$ it follows that $A_n \cong M_{2^n} (\mathbb{C})$.  Furthermore, for each $s_\alpha s_\beta^*$ with $\alpha, \beta \in E_n$, we see that 
$$s_\alpha s_\beta^* = s_\alpha p_{w_n} s_\beta^* =  s_\alpha (s_{e_n} s_{e_n}^* + s_{f_n} s_{f_n}^* ) s_\beta^* = s_{\alpha e_n} s_{\beta e_n}^* + s_{\alpha f_n} s_{\beta f_n}^*.$$
Hence if for all $n \in \N \cup \{ 0 \}$ we identify $A_n$ with $M_{2^n} (\mathbb{\C})$ via an isomorphism, then for each $n$ the inclusion map $A_n \hookrightarrow A_{n+1}$ may be identified with the map $x \mapsto \left( \begin{smallmatrix} x & 0 \\ 0 & x \end{smallmatrix} \right)$.  Thus $C^*(F) = \overline{\bigcup_{n=0}^\infty A_n}$ is isomorphic to the UHF-algebra $M_{2^\infty}$.

Finally, it follows from \cite[Theorem~6.5.7, p.211]{Ped} that $M_{2^\infty}$ is not Type~I.  Since any $C^*$-subalgebra of a Type~I $C^*$-algebra is Type~I \cite[Theorem~6.2.9, p.199]{Ped}, we conclude that $C^*(F)$ is not Type~I. 
\end{proof}

\begin{proposition} \label{Naimark-hereditary-prop}
Let $E$ be a row-countable directed graph such that $C^*(E)$ has a unique irreducible representation up to unitary equivalence.  If $v \in E^0$ and we define $H(v) := \{ w \in E^0 : v \geq w \}$, then $E_{H(v)}$ is a countable graph, $C^*(E_{H(v)})$ is Morita equivalent to $C^*(E)$, and $C^*(E_{H(v)}) \cong K(\Hi)$ for some separable Hilbert space $\Hi$.
\end{proposition}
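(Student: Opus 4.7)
The plan is to verify the three conclusions in sequence. First observe that $H(v)$ is hereditary: if $w \in H(v)$ and $e \in E^1$ satisfies $s(e) = w$, then $v \geq w \geq r(e)$, so $r(e) \in H(v)$. For countability of $E_{H(v)}$, I would stratify $H(v)$ by the minimum length of a path from $v$. Let $H_n$ denote the set of vertices reachable from $v$ by a path of length at most $n$. Then $H_0 = \{v\}$ is countable; assuming $H_n$ is countable, $H_{n+1} \subseteq H_n \cup \bigcup_{w \in H_n} r(s^{-1}(w))$ is a countable union of countable sets by row-countability, hence countable. Induction yields $H(v) = \bigcup_{n=0}^\infty H_n$ countable, and $E_{H(v)}^1 = \bigcup_{w \in H(v)} s^{-1}(w)$ is likewise countable. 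In particular, $C^*(E_{H(v)})$ is separable.

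Next, I would obtain Morita equivalence with $C^*(E)$ by applying Theorem \ref{hereditary-restriction-thm}, which says that $C^*(E_{H(v)})$ is Morita equivalent to the ideal $I_{H(v)}$ of $C^*(E)$. By Lemma \ref{Naimark-implies-simple-lem}, the unique-irreducible-representation hypothesis forces $C^*(E)$ to be simple, so every nonzero ideal of $C^*(E)$ equals $C^*(E)$. Since $p_v$ is nonzero in $C^*(E)$ (vertex projections are always nonzero in a graph $C^*$-algebra, as witnessed by the standard path-space Cuntz-Krieger family in which each $p_v$ acts as a nonzero projection) and $p_v \in I_{H(v)}$, we conclude $I_{H(v)} = C^*(E)$. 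Hence $C^*(E_{H(v)})$ is Morita equivalent to $C^*(E)$.

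Finally, to obtain $C^*(E_{H(v)}) \cong K(\Hi)$, I would invoke Rieffel's correspondence, which provides a natural bijection between the unitary equivalence classes of irreducible representations of two Morita equivalent $C^*$-algebras (implemented by induction along the imprimitivity bimodule). This transfers the unique-irreducible-representation property from $C^*(E)$ to $C^*(E_{H(v)})$. Since $C^*(E_{H(v)})$ is separable by the first step, A.~Rosenberg's 1953 theorem (cited in the introduction) immediately yields $C^*(E_{H(v)}) \cong K(\Hi)$ for some separable Hilbert space $\Hi$.

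The step I expect to require the most care is the transfer of the unique-irreducible-representation property across Morita equivalence via Rieffel's correspondence; it is standard but deserves an explicit citation. The subsidiary fact $p_v \neq 0$ must also be articulated, and row-countability is used in an essential way in the countability argument, which explains its appearance in the hypothesis.
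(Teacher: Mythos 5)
Your proposal is correct and follows essentially the same route as the paper: the inductive countability argument via row-countability, Theorem~\ref{hereditary-restriction-thm} combined with simplicity from Lemma~\ref{Naimark-implies-simple-lem} to get $I_{H(v)} = C^*(E)$, and the Rieffel correspondence plus Rosenberg's theorem to conclude. Your explicit justification that $I_{H(v)} \neq 0$ via $p_v \neq 0$ is a detail the paper leaves implicit, and it is a welcome addition.
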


\begin{proof}
We see that $H(v) := \{ w \in E^0 : v \geq w \}$ is a hereditary subset of $E$.   In addition, if we let $H_0 = \{ v \}$ and inductively define $H_{n+1} := r(s^{-1}(H_n))$, then one can easily verify that $H(v) = \bigcup_{n=0}^\infty H_n$.  Since $H_0$ is finite and $E$ is row-countable, an inductive argument shows that $H_n$ is countable for all $n \in \N$. Hence $E_{H(v)}^0 := H(v) = \bigcup_{n=0}^\infty H_n$ is countable, and $E_{H(v)}^1 := s^{-1}(H_v)$ is countable.  Thus $E_{H(v)} := (E_{H(v)}^0, E_{H(v)}^1, r|_{H(v)}, s|_{H(v)})$ is a countable graph.

It follows from Theorem~\ref{hereditary-restriction-thm} that $C^*(E_{H(v)})$ is Morita equivalent to $I_{H(v)}$.  Since $C^*(E)$ has a unique irreducible representation up to unitary equivalence, it follows from Lemma~\ref{Naimark-implies-simple-lem} that $C^*(E)$ is simple.  Because $I_{H(v)}$ is a nonzero ideal of $C^*(E)$, it follows that $I_{H(v)} = C^*(E)$.  Thus $C^*(E_{H(v)})$ is Morita equivalent to $C^*(E)$

Finally, since $C^*(E)$ has a unique irreducible representation up to unitary equivalence and $C^*(E_{H(v)})$ is Morita equivalent to $C^*(E)$, we may conclude that $C^*(E_{H(v)})$ has a unique irreducible representation up to unitary equivalence.  (This is because the Rieffel correspondence provides a bijective correspondence between representations of Morita equivalent $C^*$-algebras that preserves irreducibility and unitary equivalence.)  Moreover, the fact that $E_{H(v)} := (E_{H(v)}^0, E_{H(v)}^1, r|_{H(v)}, s|_{H(v)})$ is a countable graph implies $C^*(E_{H(v)})$ is a separable $C^*$-algebra.  Hence by  Rosenberg's Theorem  \cite[Theorem~4]{Ros}, $C^*(E_{H(v)}) \cong K(\Hi)$ for a separable Hilbert space $\Hi$.  
\end{proof}

\begin{proposition} \label{three-equivalent-Naimark-prop}
Let $E$ be a directed graph such that $C^*(E)$ has a unique irreducible representation up to unitary equivalence.  Then the following are equivalent:
\begin{itemize}
\item[(1)] $E$ is row-countable.
\item[(2)] $E$ is row-finite.
\item[(3)] $C^*(E)$ is AF.
\end{itemize}
\end{proposition}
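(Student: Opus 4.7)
The plan is to establish the cyclic chain $(2) \Rightarrow (1) \Rightarrow (3) \Rightarrow (2)$, with $(2) \Rightarrow (1)$ being immediate. The key tool for $(1) \Rightarrow (3)$ is Proposition \ref{Naimark-hereditary-prop}: under the row-countability hypothesis, it yields $C^*(E_{H(v)}) \cong K(\Hi_v)$ for each $v \in E^0$ and some separable Hilbert space $\Hi_v$. Since compact operator algebras are AF, Theorem \ref{AF-iff-no-cycles-thm} ensures each restricted subgraph $E_{H(v)}$ is acyclic. I would then observe that any cycle $\alpha$ in $E$ based at a vertex $u$ is automatically contained in $E_{H(u)}$: all vertices of $\alpha$ are reachable from $u$ by traversing a prefix of $\alpha$, so they belong to $H(u)$, and consequently every edge of $\alpha$ lies in $s^{-1}(H(u)) = E_{H(u)}^1$. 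Such an $\alpha$ would then be a cycle in the acyclic $E_{H(u)}$, a contradiction. Hence $E$ has no cycles, and a second application of Theorem \ref{AF-iff-no-cycles-thm} gives that $C^*(E)$ is AF.

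For $(3) \Rightarrow (2)$, I would combine Theorem \ref{AF-iff-no-cycles-thm} with the reachability clause of Theorem \ref{simple-theorem}. Acyclicity of $E$ follows immediately from the AF hypothesis. If some $v \in E^0$ were an infinite emitter, then $v$ would be singular, so simplicity of $C^*(E)$ (Lemma \ref{Naimark-implies-simple-lem}) together with Theorem \ref{simple-theorem} would force every vertex of $E$ --- including $r(e)$ for any $e \in s^{-1}(v)$ --- to reach $v$. Concatenating such an edge $e$ with a path $\beta$ from $r(e)$ to $v$ would produce a closed path of positive length at $v$, i.e., a cycle, contradicting acyclicity. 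Thus $E$ has no infinite emitters and is row-finite.

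The step I expect to require the most care is the combinatorial transfer in $(1) \Rightarrow (3)$: verifying that a cycle in $E$ sits entirely inside the appropriate restricted graph $E_{H(u)}$. Once this observation is in place, everything else follows cleanly from results previously stated in the paper. A pleasant feature of this approach is that it bypasses the AF-or-purely-infinite dichotomy for simple graph $C^*$-algebras mentioned in the introduction; the reachability conclusion of Theorem \ref{simple-theorem} together with acyclicity already suffices to rule out infinite emitters directly.
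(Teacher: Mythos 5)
Your proof is correct and follows essentially the same route as the paper: the same cyclic chain $(2)\Rightarrow(1)\Rightarrow(3)\Rightarrow(2)$, with Proposition~\ref{Naimark-hereditary-prop} and Theorem~\ref{AF-iff-no-cycles-thm} driving $(1)\Rightarrow(3)$ and the reachability clause of Theorem~\ref{simple-theorem} driving $(3)\Rightarrow(2)$. The only (harmless) difference is in ruling out cycles for $(1)\Rightarrow(3)$: you base the hereditary set at a vertex $u$ on the hypothetical cycle, so the cycle sits inside $E_{H(u)}$ for free, whereas the paper fixes a single $v$ and invokes cofinality to show any cycle must meet $H(v)$ --- your version slightly streamlines this by avoiding the appeal to cofinality.
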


\begin{proof}
\noindent $(2) \implies (1)$:  This is immediate from the definitions.

\noindent $(1) \implies (3)$:  Let $v \in E^0$, and set $H(v):= \{ w \in E^0 : v \geq w \}$.  Then $H(v)$ is a hereditary subset of $E$, and by Proposition~\ref{Naimark-hereditary-prop} $C^*(E_{H(v)}) \cong K(\Hi)$ for some separable Hilbert space $\Hi$.  Consequently $C^*(E_{H(v)})$ is AF, and Theorem~\ref{AF-iff-no-cycles-thm} implies the graph $E_{H(v)}$ has no cycles.  Hence $E$ has no cycles with vertices in $H(v)$.  Furthermore, $C^*(E)$ is simple by Lemma~\ref{Naimark-implies-simple-lem}, and thus Theorem~\ref{simple-theorem} implies that $E$ is cofinal.  Since vertices in the hereditary set $H(v)$ cannot reach cycles containing vertices in $E^0 \setminus H(v)$, we may conclude that $E$ has no cycles.  Thus Theorem~\ref{AF-iff-no-cycles-thm} implies $C^*(E)$ is AF.

\noindent $(3) \implies (2)$:  Since $C^*(E)$ is AF,  Theorem~\ref{AF-iff-no-cycles-thm} implies the graph $E$ has no cycles.  In addition, since $C^*(E)$ has a unique irreducible representation up to unitary equivalence, Lemma~\ref{Naimark-implies-simple-lem} implies that $C^*(E)$ is simple.  It then follows from Theorem~\ref{simple-theorem} that every vertex of $E$ can reach every singular vertex of $E$.  Let $v \in E^0$, and suppose $v$ is not a sink.  Then there exists $e \in s^{-1}(v)$.  Since $E$ has no cycles, it follows that $r(e)$ cannot reach $v$.  But this implies that $v$ is not a singular vertex, and hence $v$ emits a finite number of edges.  Since every vertex of $E$ that is not a sink emits a finite number of edges, $E$ is row-finite.
\end{proof}

\begin{proposition} \label{forbidden-subgraph-prop}
Let $E$ be a directed graph such that $C^*(E)$ has a unique irreducible representation up to unitary equivalence.  If $C^*(E)$ is AF, then $E$ does not contain a subgraph of the form 
\begin{equation} \label{subgraph-eq}
\xymatrix{
v_1 \ar@{-->}@/^/[rr]^{\beta_1} \ar@{-->}@/_/[rr]_{\alpha_1} & & v_2  \ar@{-->}@/^/[rr]^{\beta_2} \ar@{-->}@/_/[rr]_{\alpha_2} & & v_3  \ar@{-->}@/^/[rr]^{\beta_3} \ar@{-->}@/_/[rr]_{\alpha_3} & & \cdots
}
\end{equation}
where $v_1, v_2, \ldots$ are distinct vertices and $\alpha_1, \beta_1, \alpha_2, \beta_2, \ldots$ are distinct paths.
\end{proposition}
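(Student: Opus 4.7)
The plan is to argue by contradiction. Assume $E$ contains such a subgraph. Since $C^*(E)$ is AF, Theorem~\ref{AF-iff-no-cycles-thm} gives that $E$ has no cycles, and Proposition~\ref{three-equivalent-Naimark-prop} gives that $E$ is row-finite (in particular row-countable). Setting $H := H(v_1) = \{w \in E^0 : v_1 \geq w\}$, Proposition~\ref{Naimark-hereditary-prop} then produces an isomorphism $C^*(E_H) \cong K(\Hi)$ for some separable Hilbert space $\Hi$. Every vertex and edge appearing in the displayed subgraph lies inside $E_H$: each $v_i$ is reachable from $v_1$ via any concatenation of $\alpha_k$'s, and the intermediate vertices and edges along $\alpha_i, \beta_i$ are likewise reachable from $v_1$. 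Hence we may interpret $s_{\alpha_i}, s_{\beta_i}$, and $p_{v_i}$ as elements of $C^*(E_H)$ using its generating Cuntz--Krieger family, and from here the argument takes place entirely inside $C^*(E_H)$.

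Next, I would construct many pairwise orthogonal projections below $p_{v_1}$. For each $n \in \N$, let
$$E_n := \{ \gamma_1 \gamma_2 \cdots \gamma_n : \gamma_i \in \{\alpha_i, \beta_i\} \text{ for every } 1 \leq i \leq n \},$$
so that $|E_n| = 2^n$ and each $\gamma \in E_n$ is a path from $v_1$ to $v_{n+1}$. The key algebraic claim is that $s_\gamma^* s_\delta = 0$ whenever $\gamma, \delta \in E_n$ are distinct. Letting $j$ be the smallest index at which $\gamma_j \neq \delta_j$ and using $s_\mu^* s_\mu = p_{r(\mu)}$ to telescope away the common prefix, this reduces to proving $s_{\alpha_j}^* s_{\beta_j} = 0$. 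Here one invokes the no-cycles property: if either of the distinct paths $\alpha_j, \beta_j$ (both running from $v_j$ to $v_{j+1}$) were a prefix of the other, the suffix would be a nontrivial cycle at $v_{j+1}$, which is impossible; therefore $\alpha_j$ and $\beta_j$ must first disagree at some individual edge, and one more round of cancellation leaves a factor $s_e^* s_f$ with $e \neq f$, which vanishes because distinct edges yield partial isometries with orthogonal ranges. Passing to range projections, $\{s_\gamma s_\gamma^* : \gamma \in E_n\}$ is then a family of $2^n$ pairwise orthogonal projections, each nonzero (since $s_\gamma^* s_\gamma = p_{v_{n+1}} \neq 0$) and each dominated by $p_{s(\gamma)} = p_{v_1}$.

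To close the argument, I would observe that under the isomorphism $C^*(E_H) \cong K(\Hi)$ the projection $p_{v_1}$ must be compact and hence have some finite rank $k$, so the unital corner $p_{v_1} C^*(E_H) p_{v_1} \cong M_k(\C)$ can contain at most $k$ pairwise orthogonal nonzero projections. Choosing $n$ with $2^n > k$ contradicts the family just constructed, completing the proof. The main point requiring care is the orthogonality identity $s_\gamma^* s_\delta = 0$---in particular the no-cycles input that rules out prefix relations between $\alpha_j$ and $\beta_j$---while everything else reduces to routine telescoping with the Cuntz--Krieger relations.
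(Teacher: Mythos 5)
Your proof is correct, and it reaches the contradiction by a genuinely more elementary route than the paper. Both arguments share the same opening reduction: pass to the hereditary set $H(v_1)$, use Proposition~\ref{three-equivalent-Naimark-prop} and Proposition~\ref{Naimark-hereditary-prop} to get $C^*(E_{H(v_1)}) \cong K(\Hi)$ with $\Hi$ separable, and observe that the forbidden subgraph lives inside $E_{H(v_1)}$. From there the paper builds a homomorphism from the relative graph $C^*$-algebra $C^*(F,S)$ of the two-edges-in-a-row graph $F$ of Lemma~\ref{M-2-infinity-graph-lem}, invokes the Cuntz--Krieger Uniqueness Theorem for relative graph algebras to see this homomorphism is injective, and then contradicts the Type~I property of $K(\Hi)$ using the facts that $C^*(F)$ (a quotient of $C^*(F,S)$) contains a full corner isomorphic to $M_{2^\infty}$ and that subalgebras and quotients of Type~I algebras are Type~I. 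You instead exhibit, for each $n$, a family of $2^n$ pairwise orthogonal nonzero projections $s_\gamma s_\gamma^*$ dominated by $p_{v_1}$, and contradict the finite rank of $p_{v_1}$ as a projection in $K(\Hi)$. Your key technical point --- that $s_{\alpha_j}^* s_{\beta_j} = 0$ because a proper prefix relation between the distinct paths $\alpha_j, \beta_j$ (both from $v_j$ to $v_{j+1}$) would force a cycle at $v_{j+1}$, which Theorem~\ref{AF-iff-no-cycles-thm} forbids --- is exactly right, and it also guarantees the $2^n$ concatenations are genuinely distinct paths. What your approach buys is economy: it bypasses Lemma~\ref{M-2-infinity-graph-lem}, the relative graph $C^*$-algebra machinery and its uniqueness theorem, and the Type~I permanence properties from Pedersen, replacing all of it with a rank count in $M_k(\C) \cong p_{v_1}K(\Hi)p_{v_1}$. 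What the paper's approach buys is the identification of an explicit non-Type-I subalgebra (essentially a copy of $M_{2^\infty}$ sitting inside a corner), which is a stronger structural statement, though it is not needed for the proposition as stated.
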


\begin{proof}
For the sake of contradiction, suppose that $E$ has a subgraph of the form in \eqref{subgraph-eq}, and use the labeling of vertices and paths listed in \eqref{subgraph-eq}.  Since $C^*(E)$ is AF, Proposition~\ref{three-equivalent-Naimark-prop} implies $E$ is row-finite.  If $H(v_1):= \{ w \in E^0 : v_1 \geq w \}$, then $H(v_1)$ is a hereditary subset of $E$ and Proposition~\ref{Naimark-hereditary-prop} implies $C^*(E_{H(v)}) \cong K(\Hi)$ for a separable Hilbert space $\Hi$.  Thus $C^*(E_{H(v)})$ is a Type~I $C^*$-algebra.  Furthermore, since $v_1$ can reach every vertex on each path $\alpha_i$ and each path $\beta_i$ for all $i \in \N$, we conclude that the graph $E_{H(v)}$ has a subgraph of the form in  \eqref{subgraph-eq}.

Let $\{ s_e, p_v : e \in E_{H(v)}^1, v \in E_{H(v)}^0 \}$ be a generating Cuntz-Krieger $E_{H(v)}$-family in $C^*(E_{H(v)})$, and consider the set $\{ p_{v_i}, s_{\alpha_i}, s_{\beta_i} \}_{i=1}^\infty$.   Also let $F$ be the graph
\begin{equation} \label{F-graph-eq}
\xymatrix{
w_1 \ar@/^/[r]^{e_1} \ar@/_/[r]_{f_1} & w_2 \ar@/^/[r]^{e_2} \ar@/_/[r]_{f_2} & w_3 \ar@/^/[r]^{e_3} \ar@/_/[r]_{f_3} &  \cdots
}
\end{equation}
and let 
$$S := \{ w_i : i \in \N \text{ and } p_{v_i} = s_{\alpha_i} s_{\alpha_i}^* + s_{\beta_i} s_{\beta_i}^* \}.$$
Then $\{ p_{v_i}, s_{\alpha_i}, s_{\beta_i} \}_{i=1}^\infty$ is an $(F,S)$-family in $C^*(E_{H(v)})$, and there exists a homomorphism $\phi : C^*(F,S) \to C^*(E_{H(v)})$ (where $C^*(F,S)$ denotes the relative graph $C^*$-algebra of $F$ with the (CK3) relation imposed at the vertices in $S$).  We observe that $F$ has no cycles, and whenever $i \in \N$ with $w_i \notin S$, then the gap projection $p_{v_i} - s_{\alpha_i} s_{\alpha_i}^* - s_{\beta_i} s_{\beta_i}^* \neq 0$ whenever $v_i \notin S$, it follows from the Cuntz-Krieger Uniqueness Theorem for relative graph $C^*$-algebras that $\phi$ is injective.  Hence $\im \phi$ is a $C^*$-subalgebra of $C^*(E_{H(v)})$ isomorphic to $C^*(F,S)$.

It follows from Proposition~\ref{M-2-infinity-graph-lem} that $C^*(F)$ is not a Type~I $C^*$-algebra.  Since $C^*(F)$ is a quotient of $C^*(F,S)$, and all quotients of Type~I $C^*$-algebras  are Type~I \cite[Theorem~6.2.9, p.199]{Ped}, it follows that $C^*(F,S)$ is not a Type~I $C^*$-algebra.  Thus $\im \phi \cong C^*(F,S)$ is a $C^*$-subalgebra of $C^*(E_{H(v)})$ that is not Type~I, and since all $C^*$-subalgebras of Type~I $C^*$-algebras are Type~I \cite[Theorem~6.2.9, p.199]{Ped}, it follows that $C^*(E_{H(v)})$  is not a Type~I $C^*$-algebra.  But this contradicts the fact that $C^*(E_{H(v)}) \cong K(\Hi)$.
\end{proof}

\begin{proposition} \label{Naimark-AF-dichotomy-prop}
Let $E$ be a directed graph such that $C^*(E)$ is AF and has a unique irreducible representation up to unitary equivalence.  Then one of two distinct possibilities must occur: Either
\begin{itemize}
\item[(1)] $E$ has exactly one sink and no infinite paths; or
\item[(2)] $E$ has no sinks and $E$ contains an infinite path $\alpha := e_1 e_2 \ldots$ with $s^{-1}( s(e_i)) = \{ e_i \}$ for all $i \in \N$. 
\end{itemize}
\end{proposition}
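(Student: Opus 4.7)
The plan is to first extract structural consequences from the hypotheses. Lemma~\ref{Naimark-implies-simple-lem} gives that $C^*(E)$ is simple. Since $C^*(E)$ is also AF, Theorem~\ref{AF-iff-no-cycles-thm} gives that $E$ has no cycles and Proposition~\ref{three-equivalent-Naimark-prop} gives that $E$ is row-finite. Simplicity together with Theorem~\ref{simple-theorem} gives that $E$ is cofinal and that every vertex reaches every singular vertex; since $E$ is row-finite, its singular vertices are exactly its sinks.

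The argument then splits on whether $E$ has a sink. If $w$ is a sink, any other sink $w'$ would satisfy $w \geq w'$, which forces $w = w'$ because $w$ emits no edges, so the sink is unique. To rule out infinite paths, apply cofinality to $w$ and any putative $e_1 e_2 \ldots \in E^\infty$: it gives $w \geq s(e_i)$ for some $i$, and since a sink reaches only itself, $w = s(e_i)$, contradicting that $s(e_i)$ emits $e_i$. This yields alternative (1).

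If instead $E$ has no sinks, row-finiteness implies every vertex emits a nonempty finite set of edges, and I would establish alternative (2) by contradiction. Assume no infinite path $e_1 e_2 \ldots$ satisfies $s^{-1}(s(e_i)) = \{e_i\}$ for all $i \in \N$. A tail argument then shows every infinite path must pass through infinitely many vertices of out-degree $\geq 2$, since otherwise its tail past the last such vertex would realize exactly the forbidden kind of infinite path. Cofinality also furnishes a \emph{common-descendant property}: given any two vertices $a, b$, fix an infinite path $g_1 g_2 \ldots$ starting at $a$ and apply cofinality at $b$ to obtain an $i$ with $b \geq s(g_i)$; then $c := s(g_i)$ is reached by both $a$ and $b$.

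I would use these two ingredients to inductively construct the forbidden subgraph of Proposition~\ref{forbidden-subgraph-prop}, producing a contradiction. Start with any $v_1$ of out-degree $\geq 2$, which exists by the density statement above. Given $v_n$ of out-degree $\geq 2$, choose distinct outgoing edges $e, f$ at $v_n$, take a common descendant $c$ of $r(e)$ and $r(f)$, and extend along an infinite path from $c$ until meeting a vertex $v_{n+1}$ of out-degree $\geq 2$. Concatenating $e$ and $f$ respectively with the corresponding path segments yields two distinct paths $\alpha_n, \beta_n$ from $v_n$ to $v_{n+1}$. Acyclicity prevents $v_{n+1}$ from coinciding with any earlier $v_j$, since $v_j \geq v_n \geq v_{n+1} = v_j$ would close a cycle, and distinctness of the family $\{\alpha_i, \beta_i\}$ is automatic from distinctness of endpoint pairs together with $\alpha_n \neq \beta_n$. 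This produces the forbidden subgraph, contradicting Proposition~\ref{forbidden-subgraph-prop}. The main obstacle is precisely this inductive construction: ensuring at every stage that the new vertex is both fresh and has out-degree $\geq 2$ --- exactly where acyclicity and the density of out-degree-$\geq 2$ vertices on infinite paths are essential.
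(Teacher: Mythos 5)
Your proof is correct and follows essentially the same route as the paper: the same case split on the existence of a sink, the same structural consequences from simplicity (cofinality, acyclicity, row-finiteness), and the same reduction of the no-sink case to constructing the forbidden subgraph and invoking Proposition~\ref{forbidden-subgraph-prop}. The only difference is cosmetic --- the paper branches off a single fixed infinite path and uses cofinality to route the extra edge back onto that path, whereas you organize the same inductive construction via a common-descendant lemma and the density of out-degree-$\geq 2$ vertices along infinite paths.
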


\begin{proof}
Since $C^*(E)$ has a unique irreducible representation up to unitary equivalence, it follows from Lemma~\ref{Naimark-implies-simple-lem} that $C^*(E)$ is simple, and it follows from Theorem~\ref{simple-theorem} that $E$ is cofinal, satisfies Condition~(L), and every vertex of $E$ can reach every singular vertex of $E$.

The fact that every vertex of $E$ can reach every singular vertex of $E$ implies that $E$ has at most one sink.  If $E$ has one sink, then the cofinality of $E$ implies that $E$ has no infinite paths (since a sink cannot reach a vertex on the infinite path), and hence we are in situation (1) of the proposition.  

If $E$ has no sinks, then $E$ must contain an infinite path $f_1 f_2 \ldots$.  To show that we are in situation (2) it suffices to show that there exists $N \in \N$ such that $s^{-1}(s(f_i)) = \{ f_i \}$ for all $i \geq N$.  (For then we can take $e_i := f_{N+i}$, and $e_1 e_2 \ldots$ is the desired path.)

Suppose to the contrary that the infinite path $f_1 f_2 \dots$ does not have our desired property.  This means that for each $k \in \N$ there exists $n \geq k$ such that $s^{-1}(s(f_n))$ contains an element different from $f_n$.  For convenience of notation, we shall set $v_i := s(f_i)$ for all $i \in \N$.

We shall describe an inductive construction to produce a subgraph: To begin, choose a natural number $n_1$ such that $s^{-1}(f_{n_1})$ contains an element $g$ different from $f_{n_1}$.  By cofinality there exists a path $\mu$ with $s(\mu) = r(g)$ and $r(\mu) = s(f_{n_2})$ for some $n_2 \in \N$.  Since $C^*(E)$ is AF, it follows from Theorem~\ref{AF-iff-no-cycles-thm} that $E$ has no cycles, and hence $n_2 > n_1$.  Moreover, by the defining property of the path $f_1f_2 \ldots$ we may, after possibly extending $\mu$ along this path, assume that $s^{-1}(f_{n_2})$ contains an element $g'$ different from $f_{n_2}$.   If we let $\alpha_1 := f_{n_1} f_{n_1+1} \ldots f_{n_2-1}$ and $\beta_1 := g \mu$, we have produced a subpath
$$ \xymatrix{
v_{n_1} \ar@{-->}@/^/[rr]^{\beta_1} \ar@{-->}@/_/[rr]_{\alpha_1} & &v_{n_2}
}
$$
with $n_1 < n_2$ and the property that $v_{n_2} = s^{-1}(f_{n_2})$ contains an element $g'$ different from $f_{n_2}$. 

Repeating this process, we inductively construct a subgraph
$$
\xymatrix{
v_{n_1} \ar@{-->}@/^/[rr]^{\beta_1} \ar@{-->}@/_/[rr]_{\alpha_1} & & v_{n_2}  \ar@{-->}@/^/[rr]^{\beta_2} \ar@{-->}@/_/[rr]_{\alpha_2} & & v_{n_3}  \ar@{-->}@/^/[rr]^{\beta_3} \ar@{-->}@/_/[rr]_{\alpha_3} & & \cdots
}
$$
with $n_1 < n_2 < n_3 < \ldots$ in the vertex subscripts.

Since $C^*(E)$ has a unique irreducible representation up to unitary equivalence, Proposition~\ref{forbidden-subgraph-prop} implies that $E$ does not have such a subgraph.  Hence we have a contradiction.
\end{proof}

\section{Naimark's Problem for certain graph $C^*$-algebras}

In this section we prove our two main results: (1) Naimark's Problem has an affirmative answer for the class of AF graph $C^*$-algebras, and (2) Naimark's Problem has an affirmative answer for the class of $C^*$-algebras of row-countable graphs.

If $\Hi$ is a Hilbert space, then for any $x,y \in \Hi$, we let $\Theta_{x,y} : \Hi \to \Hi$ denote the rank-one operator given by $$\Theta_{x,y}(z) := \langle y,z \rangle x.$$  Since $K(\Hi)$ is the closure of the finite-rank operators, we see that if $\beta$ is a basis for $\Hi$, then $K(\Hi) = \clspan \{ \Theta_{x,y} : x,y \in \beta \}$.

If $V : \Hi_1 \to \Hi_2$ is an isometry between Hilbert spaces, we let $\Ad_V : K(\Hi_1) \to K(\Hi_2)$ denote the homomorphism given by $\Ad_V (T) := V T V^*$.  It is straightforward to verify that $\Ad_V$ is injective and for any $x,y \in \Hi_1$ we have $\Ad_V ( \Theta_{x,y} ) = \Theta_{Vx, Vy}$.

\begin{theorem} \label{Naimark-AF-thm}
Let $E$ be a directed graph such that $C^*(E)$ is AF.  If $C^*(E)$ has a unique irreducible representation up to unitary equivalence, then $C^*(E) \cong K(\Hi)$ for some Hilbert space $\Hi$.
\end{theorem}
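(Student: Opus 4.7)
The plan is to combine the dichotomy of Proposition~\ref{Naimark-AF-dichotomy-prop} with the classical fact that a simple $C^*$-algebra containing a minimal projection is isomorphic to $K(\Hi)$ for some Hilbert space $\Hi$. Since Lemma~\ref{Naimark-implies-simple-lem} ensures $C^*(E)$ is simple, it will suffice to exhibit a nonzero projection $p \in C^*(E)$ with $pC^*(E)p = \C p$: such a $p$ is automatically a full minimal projection, and the standard construction (left multiplication on the completion of $C^*(E)p$ under the scalar-valued inner product coming from $pC^*(E)p = \C p$) then realizes $C^*(E)$ as $K(\Hi)$.

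In case~(1) of Proposition~\ref{Naimark-AF-dichotomy-prop}, where $E$ has a single sink $v_0$ and no infinite paths, I would take $p := p_{v_0}$. Because $v_0$ is a sink, the only path in $E$ that starts at $v_0$ is the trivial path $v_0$, and so the spanning description of a corner reduces immediately to
\[
p_{v_0} C^*(E) p_{v_0} = \clspan \{ s_\alpha s_\beta^* : s(\alpha) = s(\beta) = v_0,\ r(\alpha) = r(\beta) \} = \C p_{v_0}.
\]
In case~(2), where $E$ has no sinks and contains an infinite path $e_1 e_2 \ldots$ with $s^{-1}(s(e_i)) = \{ e_i \}$ for every $i$, I would write $v_i := s(e_i)$ and take $p := p_{v_1}$. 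Because $C^*(E)$ is AF, Theorem~\ref{AF-iff-no-cycles-thm} says $E$ has no cycles, so the vertices $v_1, v_2, \ldots$ are pairwise distinct; combined with the fact that each $v_i$ emits only $e_i$, this shows that the paths starting at $v_1$ are precisely $v_1, e_1, e_1 e_2, e_1 e_2 e_3, \ldots$ and that they all terminate at distinct vertices, giving
\[
p_{v_1} C^*(E) p_{v_1} = \clspan \{ s_{e_1 \ldots e_n} s_{e_1 \ldots e_n}^* : n \geq 0 \}.
\]
Proposition~\ref{three-equivalent-Naimark-prop} forces $E$ to be row-finite, so each $v_i$ is a regular vertex with $s^{-1}(v_i) = \{ e_i \}$, and (CK3) gives $p_{v_i} = s_{e_i} s_{e_i}^*$. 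Using $s_{e_1 \ldots e_{n-1}} = s_{e_1 \ldots e_{n-1}} p_{v_n} = s_{e_1 \ldots e_{n-1}} s_{e_n} s_{e_n}^*$ and multiplying on the right by $s_{e_1 \ldots e_{n-1}}^*$ then shows $s_{e_1 \ldots e_{n-1}} s_{e_1 \ldots e_{n-1}}^* = s_{e_1 \ldots e_n} s_{e_1 \ldots e_n}^*$, so the sequence is constant in $n$ and equal to $p_{v_1}$; hence $p_{v_1} C^*(E) p_{v_1} = \C p_{v_1}$.

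With such a $p$ in hand, I would finish via the standard minimal-projection argument: form the Hilbert space $\Hi$ as the completion of $C^*(E)p$ under the scalar-valued inner product determined by $(yp)^*(xp) = \langle xp, yp \rangle p$, let $\pi : C^*(E) \to B(\Hi)$ be the left regular representation, and observe that $\pi(apb^*)$ is the rank-one operator $\Theta_{ap, bp}$. Simplicity of $C^*(E)$ makes $\pi$ injective, while the preimage $\pi^{-1}(K(\Hi))$ is a nonzero closed ideal and therefore equals $C^*(E)$, so $\pi$ is an isomorphism onto $K(\Hi)$. The only genuinely nontrivial step in the whole argument is the collapse of the projections $s_{e_1 \ldots e_n} s_{e_1 \ldots e_n}^*$ in case~(2); once Proposition~\ref{Naimark-AF-dichotomy-prop} is available, the remainder is a direct consequence of simplicity, row-finiteness, and the (CK3) relation.
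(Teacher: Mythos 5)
Your proof is correct, but it takes a genuinely different route from the paper's once the dichotomy of Proposition~\ref{Naimark-AF-dichotomy-prop} is in hand. The paper works directly with the generators: in Case~(1) it shows $\{s_\alpha s_\beta^* : r(\alpha)=r(\beta)=v_0\}$ is a system of matrix units spanning a nonzero ideal, and in Case~(2) it realizes $C^*(E)$ as the direct limit $\varinjlim K(\Hi_n) \cong K(\ell^2(E^\infty))$ via the isometries $V_n$, which requires a separate saturated-hereditary-set argument to prove that the corners $A_n$ exhaust $C^*(E)$. You instead exhibit a minimal projection: $p_{v_0}$ in Case~(1), and $p_{v_1}$ in Case~(2), where your key computation --- that $s^{-1}(v_i)=\{e_i\}$ with $v_i$ regular forces $s_{e_1\cdots e_{n-1}}s_{e_1\cdots e_{n-1}}^* = s_{e_1\cdots e_n}s_{e_1\cdots e_n}^*$, collapsing the corner to $\C p_{v_1}$ --- is sound (the $v_i$ are distinct because $E$ has no cycles, so distinct paths out of $v_1$ have distinct ranges and only the diagonal terms $s_\alpha s_\alpha^*$ survive in the corner). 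You then invoke the classical fact that a simple $C^*$-algebra containing a projection $p$ with $pAp=\C p$ is elementary; your sketch of that fact (left regular representation on the completion of $Ap$, with $\pi(apb^*)=\Theta_{ap,bp}$, injectivity and surjectivity onto $K(\Hi)$ from simplicity) is the standard imprimitivity-bimodule argument and works without any separability assumption. The trade-off: your argument is shorter and avoids both the direct-limit bookkeeping and the hereditary-set argument, at the cost of importing the Morita-theoretic fact about minimal projections, which the paper's more hands-on approach does not need; the paper's proof also has the virtue of explicitly identifying the Hilbert space as $\ell^2(E^*(v_0))$ in Case~(1) and $\ell^2(E^\infty)$ in Case~(2), whereas yours produces $\Hi$ only as an abstract completion.
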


\begin{proof}
Throughout, let $\{s_e, p_v : e \in E^1, v \in E^0 \}$ be a generating Cuntz-Krieger $E$-family.  By Proposition~\ref{Naimark-AF-dichotomy-prop} there are two cases to consider.

\smallskip

\noindent \textsc{Case I:} $E$ has exactly one sink and no infinite paths.

Let $v_0$ denote the sink of $E$, and let $E^*(v_0) := \{ \alpha \in E^* : r(\alpha) = v_0 \}$.  Define $I_{v_0} := \clspan \{ s_\alpha s_\beta^* : \alpha, \beta \in E^*(v_0) \}$.  Since no path ending at the sink $v_0$ can be extended, for any finite paths $\alpha, \beta, \gamma, \delta$ with either $r(\alpha)=r(\beta) = v_0$ or $r(\gamma) = r(\delta)$ we have 
$$ (s_\alpha s_\beta^*) (s_\gamma s_\delta^*) := \begin{cases} s_\alpha s_\delta^* & \text{ if $\beta = \gamma$} \\ 0 & \text{ if $\beta \neq \gamma$,} \end{cases} $$
which implies that $I_{v_0}$ is an ideal, and that $\{ s_\alpha s_\beta^* : \alpha, \beta \in E^*(v_0) \}$ is a set of matrix units indexed by $E^*(v_0)$.  Hence $I_{v_0} \cong K( \Hi)$, where $\Hi := \ell^2 ( E^*(v_0) )$.  Furthermore, since $p_{v_0} \in I_{v_0}$, the ideal $I_{v_0}$ is nonzero.  By Lemma~\ref{Naimark-implies-simple-lem} $C^*(E)$ is simple, and hence $I_{v_0} = C^*(E)$.  Thus the result holds in this case.

\smallskip

\noindent \textsc{Case II:} $E$ contains an infinite path $\alpha := e_1 e_2 \ldots$ with $s^{-1}( s(e_i)) = \{ e_i \}$ for all $i \in \N$.

$$
\xymatrix{ v_1 \ar[r]^{e_1} & v_2 \ar[r]^{e_2} & v_3 \ar[r]^{e_3} & \cdots
}
$$
For convenience of notation, let $v_i := s(e_i)$, and for each $n \in \N$ define $E^*(v_n) := \{ \alpha \in E^* : r(\alpha) = v_n$.  Let $\Hi_n := \ell^2 ( E^*(v_n))$, and for each $\alpha \in E^*(v_n)$ let $\delta_\alpha \in \Hi_n$ denote the point mass function at $\alpha$, so that $\{ \delta_\alpha : \alpha \in E^*(v_n) \}$ forms an orthonormal basis for $\Hi_n$.

For each $n \in \N$ define $V_n : \Hi_n \to \Hi_{n+1}$ to be the isometry with $$V_n ( \delta_\alpha) := \delta_{\alpha e_n}$$ for each $\alpha \in \ell^2 ( E^*(v_n))$.  Also define $\Ad_{V_n} : K(\Hi_n) \to K(\Hi_{n+1})$ by $\Ad_{V_n} (T) := V_n T V_n^*$.

For each $n \in \N$ define $A_n := \{ s_\alpha s_\beta^* : \alpha, \beta \in E^*(v_n) \}$.  If we consider the generating set $\{ s_\alpha s_\beta^* : \alpha, \beta \in E^*(v_n) \}$, then for any $\beta, \gamma \in E^*(v_n)$, we have $r(\beta) = r(\gamma) = v_n$, and since $E$ has no cycles the only way for one of $\beta$ and $\gamma$ to extend the other is if $\beta = \gamma$.  Hence for any $\alpha, \beta, \gamma, \delta \in E^*(v_n)$, we have
$$ (s_\alpha s_\beta^*) (s_\gamma s_\delta^*) := \begin{cases} s_\alpha s_\delta^* & \text{ if $\beta = \gamma$} \\ 0 & \text{ if $\beta \neq \gamma$} \end{cases} $$
and $\{ s_\alpha s_\beta^* : \alpha, \beta \in E^*(v_n)$ is a set of matrix units indexed by $E^*(v_n)$.  Hence there exists an isomorphism $\phi_n : A_n \to K(\Hi_n)$ satisfying $\phi_n (\Theta_{\alpha, \beta}) = s_\alpha s_\beta^*$.  

Let $\iota_n : A_n \hookrightarrow A_{n+1}$ denote the inclusion map.  For each $n \in \N$ and for all $\alpha, \beta \in E^*(v_n)$ we have 
\begin{align*}
\phi_{n+1} &\circ \Ad_{V_n} (\Theta_{\delta_\alpha, \delta_\beta}) = \phi_{n+1} (V_n \Theta_{\delta_\alpha, \delta_\beta} V_n^*) = \phi_{n+1} ( \Theta_{V_n \delta_\alpha, V_n \delta_\beta} ) \\
&= \phi_n ( \Theta_{\delta_{\alpha e_n}, \delta_{\beta e_n}} ) = s_{\alpha e_n} s_{\beta e_n}^* =  s_\alpha s_{e_n} s_{e_n}^* s_\beta^* = s_\alpha p_{s(e_n)} s_\beta^* =   s_\alpha p_{s(e_n)} s_\beta^* \\
&= s_\alpha s_\beta^* = \iota_n \circ \phi_n ( \Theta_{\delta_\alpha, \delta_\beta}).
\end{align*}
Thus for each $n \in \N$ we have $\phi_{n+1} \circ \Ad_{V_n} =  \iota_n \circ \phi_n$ and the diagram
\begin{equation} \label{square-commute-eq}
\xymatrix{ 
K(\Hi_n) \ar[r]^<>(.4){\Ad_{V_n}} \ar[d]_{\phi_n} & K (\Hi_{n+1}) \ar[d]_{\phi_{n+1}} \\
A_n \ar[r]^{\iota_n} & A_{n+1}
}
\end{equation}
commutes.  Since the direct limit of the sequence 
$$
\xymatrix{ A_1 \ar[r]^{\iota_1} & A_2 \ar[r]^{\iota_2} & A_3 \ar[r]^{\iota_3} & \ldots}
$$
is equal to $\overline{\bigcup_{n=1}^\infty A_n}$, and since for all $n \in \N$ the map $\phi_n : A_n \to A_{n+1}$ is an isomorphism and the diagram in \eqref{square-commute-eq} commutes, we may conclude that 
\begin{equation} \label{compact-union-eq}
\varinjlim K(\Hi_n) \cong \overline{\bigcup_{n=1}^\infty A_n},
\end{equation}
where $\varinjlim K(\Hi_n)$ is the direct limit of the sequence
$$
\xymatrix{ K(\Hi_1) \ar[r]^<>(.4){\Ad_{V_1}} & K(\Hi_2) \ar[r]^<>(.4){\Ad_{V_2}} & K(\Hi_3) \ar[r]^<>(.4){\Ad_{V_3}} & \ldots}.
$$

Next we consider the set of infinite paths $E^\infty$.  For any infinite path $\mu \in E^\infty$, we must have $\mu = \alpha e_i e_{i+1} e_{i+1} \ldots$ for some $\alpha \in E^*$ and some $i \in \N$, for otherwise the vertex $v_1$ could not reach a vertex on $\mu$, contradicting the cofinality of $E$.

Define $\Hi_\infty := \ell^2 (E^\infty)$ and for $\mu \in E^\infty$ let $\delta_\mu$ denote the point mass function at $\mu$.  Then $\{ \delta_\mu : \mu \in E^\infty \}$ is an orthonormal basis for $\Hi_\infty$.  For each $n \in \N$ define an isometry $W_n : \Hi_n \to \Hi_\infty$ by $$W_n (\delta_\alpha) = \delta_{\alpha e_n e_{n+1} \ldots}.$$  For each $n \in \N$ and for any $\alpha \in E^*(v_n)$ we have
$$W_{n+1} (V_n (\delta_\alpha)) = W_{n+1} (\delta_{\alpha e_n}) = \delta_{\alpha e_n e_{n+1} e_{n+2} \ldots } = W_n (\delta_{\alpha})$$
and hence $W_{n+1} \circ V_n = W_n$ for all $n \in \N$.  

In addition, for any $n \in \N$ we define $\Ad_{W_n} : K(\Hi_n) \to K(\Hi_\infty)$ by $\Ad_{W_n} (T) := W_n T W_n^*$.  For any $T \in K(\Hi_n)$ we have
\begin{align*}
\Ad_{W_{n+1}} \circ \Ad_{V_n} (T) &= \Ad_{W_{n+1}} (V_n T V_n^*) = W_{n+1} V_n T V_n^* W_{n+1}^* \\
&= (W_{n+1} V_n) T (W_{n+1} V_n)^* = W_n T W_n^* = \Ad_{W_n} (T)
\end{align*}
so that $$\Ad_{W_{n+1}} \circ \Ad_{V_n} = \Ad_{W_n}$$ for all $n \in \N$.  By the universal property of the direct limit there exists a homomorphism $\psi : \varinjlim K(\Hi_n) \to K (\Hi_\infty )$ with $\im \Ad_{W_n} \subseteq \im \psi$ for all $n \in \N$, and furthermore, since each $\Ad_{W_n}$ is injective for all $n \in \N$, we may conclude that $\psi :  \varinjlim K(\Hi_n) \to K (\Hi_\infty )$ is injective.

Moreover, for any $\mu, \nu \in E^\infty$, we may write $\mu = \alpha e_j e_{j+1} \ldots$ and $\nu = \beta e_j e_{j+1} \ldots$ for some $j \in \N$ and some $\alpha, \beta \in E^*(v_j)$, from which it follows that
\begin{align*}
\Theta_{\delta_\mu, \delta_\nu} &= \Theta_{\delta_{\alpha e_j e_{j+1} \ldots}, \delta_{\beta e_j e_{j+1} \ldots}} =
\Theta_{W_j(\delta_{\alpha}), W_j(\delta_{\beta})} 
= W_j \Theta_{\alpha, \beta} W_j^* \\
&= \Ad_{W_j} (\Theta_{\delta_\alpha, \delta_\beta}) \in \im \Ad_{W_j} \subseteq \im \psi.
\end{align*}
Hence $\{ \Theta_{\delta_\mu, \delta_\nu} : \mu, \nu \in E^\infty \} \subseteq \im \psi$, so that $\im \psi = K(\Hi_\infty)$, and $\psi$ is surjective.  Therefore $\psi : \varinjlim K(\Hi_n) \to K (\Hi_\infty )$ is an isomorphism, and
\begin{equation} \label{compacts-direct-lim-eq}
\varinjlim K(\Hi_n) \cong K (\Hi_\infty ).
\end{equation}

Next we let 
\begin{align*}
H := \{ v \in E^0 : p_v = \sum_{i=1}^k &s_{\alpha_i} s_{\beta_i}^* \text{ for some } \alpha_1, \ldots, \alpha_k, \beta_1, \ldots, \beta_k \in \bigcup_{n=1}^\infty E^*(v_n) \\
&\text{ satisfying } s(\alpha_i) = s(\beta_i) = v \text{ for all } 1 \leq i \leq k \}.
\end{align*}


\noindent We shall show that $H$ is a saturated and hereditary subset of $E$.  

To show that $H$ is hereditary, we first observe that for each $i \in \N$ we have $p_{v_i} = s_{e_i} s_{e_i}^*$ and that $s(e_i) = v_i$ and $r(e_i) = v_{i+1}$, implying that $v_i \in H$.  Thus $\{ v_1, v_2, \ldots \} \subseteq H$.  Next, suppose that $e \in E^1$ and $s(e) \in H$.  If $s(e) = v_i$ for some $i \in \N$, then from the previous sentence we have that $r(e) = v_{e+i} \in H$.  If $s(e) \neq v_i$ for all $i \in \N$, we use the fact that $s(e) \in H$ to write 
$$p_{s(e)} = \sum_{i=1}^k s_{\alpha_i} s_{\beta_i}^*$$
for some $\alpha_1, \ldots, \alpha_k, \beta_1, \ldots, \beta_k \in \bigcup_{n=1}^\infty E^*(v_n)$ with $s(\alpha_i) = s(\beta_i) = s(e)$ for all $1 \leq i \leq k$, and moreover, the fact that $s(e) \neq v_i$ for all $i \in \N$ implies that $\alpha_i$ and $\beta_i$ are paths of length at least 1 for each $1 \leq i \leq k$.  Consequently,
\begin{equation} \label{projection-range-in-H-eq}
p_{r(e)} = s_e^* s_e = s_e^* p_{s(e)} s_e = s_e^* \left( \sum_{i=1}^k s_{\alpha_i} s_{\beta_i}^* \right) s_e =  \sum_{i=1}^k s_e^* s_{\alpha_i} s_{\beta_i}^* s_e.
\end{equation}
For each $1 \leq i \leq k$, we may use the fact that $\alpha_i$ and $\beta_i$ have lengths at least 1 to write $\alpha_i = f_1 \ldots f_l$ and $\beta_i = g_1 \ldots g_m$ for edges $f_1, \ldots, f_l, g_1, \ldots, g_m \in E^1$, and then we have
$$
s_e^* s_{\alpha_i} s_{\beta_i}^* s_e = \begin{cases} s_{f_2 \ldots f_l} s_{g_2 \ldots g_m}^* & \text{ if $f_1 = e$ and $g_1 = e$} \\
0 & \text{ otherwise.}
\end{cases}
$$
For the nonzero case above, we see that $s(f_2) = r(e)$ and $s(g_2) = r(e)$, and also $r(f_l) = r(g_m) = r(\alpha_i) = r(\beta_i)$ so that $s_e^* s_{\alpha_i} s_{\beta_i}^* s_e = s_{f_2 \ldots f_l} s_{g_2 \ldots g_m}^*$ has the properties given in defining the set $H$.  Consequently,  \eqref{projection-range-in-H-eq} shows that $r(e) \in H$.  Hence $H$ is hereditary.

To see that $H$ is saturated, suppose that $v \in E^0$ is a regular vertex with $r(s^{-1}(v)) \subseteq H$.  For each $e \in s^{-1}(v)$, the fact that $r(e) \in H$ allows us to write  
$$p_{r(e)} = \sum_{i=1}^{k_e} s_{\alpha^e_i} s_{\beta^e_i}^*$$
for some $\alpha^e_1, \ldots, \alpha^e_k, \beta^e_1, \ldots, \beta^e_k \in \bigcup_{n=1}^\infty E^*(v_n)$ with $s(\alpha^e_i) = s(\beta^e_i) = v$ for all $1 \leq i \leq k_e$.  Hence
\begin{align*}
p_v &= \sum_{s(e) = v } s_e s_e^* = \sum_{s(e) = v } s_e p_{r(e)} s_e^* = \sum_{s(e) = v } s_e \left( \sum_{i=1}^{k_e} s_{\alpha^e_i} s_{\beta^e_i}^* \right) s_e^* \\
&=  \sum_{s(e) = v } \sum_{i=1}^{k_e} s_e s_{\alpha^e_i} s_{\beta^e_i}^*s_e^* =  \sum_{s(e) = v } \sum_{i=1}^{k_e} s_{e\alpha^e_i} s_{e\beta^e_i}^* 
\end{align*}
and since $s(e\alpha^e_i) = s(e\beta^e_i) = v$ and $r(e\alpha^e_i) = r(e\beta^e_i) \in \bigcup_{n=1}^\infty E^*(v_n)$, it follows that $v \in H$.  Thus $H$ is saturated.

Since $H$ is a nonempty saturated hereditary subset, and since $C^*(E)$ is simple by Lemma~\ref{Naimark-implies-simple-lem}, it follows from Theorem~\ref{simple-theorem} that $H = E^0$.  Consequently, for any $v \in E^0$ we have that $p_v = \sum_{i=1}^k s_{\alpha_i} s_{\beta_i}^*$ for some paths  $\alpha_1, \ldots, \alpha_k, \beta_1, \ldots, \beta_k \in \bigcup_{n=1}^\infty E^*(v_n)$ satisfying $s(\alpha_i) = s(\beta_i) = v$  for all $1 \leq i \leq k$.  Thus $p_v \in  \overline{\bigcup_{n=1}^\infty A_n}$.

Likewise, for any $e \in E^1$, we have $r(e) \in H$ and $p_{r(e)} = \sum_{i=1}^k s_{\alpha_i} s_{\beta_i}^*$ for some $\alpha_1, \ldots, \alpha_k, \beta_1, \ldots, \beta_k \in \bigcup_{n=1}^\infty E^*(v_n)$ satisfying $s(\alpha_i) = s(\beta_i) = r(e)$  for all $1 \leq i \leq k$.  Thus $s_e = s_e p_{r(e)} = \sum_{i=1}^k s_e s_{\alpha_i} s_{\beta_i}^* =  \sum_{i=1}^k s_{e \alpha_i} s_{\beta_i}^* \in  \overline{\bigcup_{n=1}^\infty A_n}$.

Hence $\{ p_v, s_e : v \in E^0, e \in E^1 \} \subseteq  \overline{\bigcup_{n=1}^\infty A_n}$, and it follows that
\begin{equation} \label{C*-is-union-eq}
C^*(E) =  \overline{\bigcup_{n=1}^\infty A_n}.
\end{equation}
Combining \eqref{compact-union-eq}, \eqref{compacts-direct-lim-eq}, and \eqref{C*-is-union-eq} gives the desired result.
\end{proof}

\begin{theorem} \label{Naimark-row-countable-thm}
If $E$ is a row-countable directed graph such that $C^*(E)$ has a unique irreducible representation up to unitary equivalence, then $C^*(E) \cong K(\Hi)$ for some Hilbert space $\Hi$.
\end{theorem}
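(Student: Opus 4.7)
The plan is to observe that this theorem is essentially a corollary of the two main pieces of machinery already assembled: the equivalence in Proposition~\ref{three-equivalent-Naimark-prop} and the AF case handled in Theorem~\ref{Naimark-AF-thm}. Since the hypothesis includes ``$C^*(E)$ has a unique irreducible representation up to unitary equivalence,'' we are in the setting where Proposition~\ref{three-equivalent-Naimark-prop} applies, and under that proposition the three conditions of being row-countable, being row-finite, and being AF are all equivalent. Thus the first step is simply to invoke that equivalence to upgrade the row-countability assumption to the statement that $C^*(E)$ is AF.

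Once $C^*(E)$ is known to be AF and to have a unique irreducible representation up to unitary equivalence, the hypotheses of Theorem~\ref{Naimark-AF-thm} are satisfied verbatim, and that theorem delivers the conclusion $C^*(E) \cong K(\Hi)$ for some Hilbert space $\Hi$. So the proof consists of two citations chained together, and can be written in essentially one line.

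Because all the substantive work is done in the earlier results, there is no real obstacle to overcome in the argument itself. The genuine difficulty was pushed back into Proposition~\ref{three-equivalent-Naimark-prop} (which in turn relied on the forbidden-subgraph analysis of Proposition~\ref{forbidden-subgraph-prop} and on Rosenberg's theorem applied to hereditary subgraphs via Proposition~\ref{Naimark-hereditary-prop}) and into the AF case Theorem~\ref{Naimark-AF-thm}, whose proof handled the two structural possibilities from Proposition~\ref{Naimark-AF-dichotomy-prop} by direct construction. Nothing new is needed at this stage.
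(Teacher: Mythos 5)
Your proposal is correct and follows exactly the same route as the paper: Proposition~\ref{three-equivalent-Naimark-prop} upgrades row-countability to the AF property under the unique-irreducible-representation hypothesis, and then Theorem~\ref{Naimark-AF-thm} gives the conclusion. This matches the paper's own two-line proof.
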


\begin{proof}
It follows from Lemma~\ref{Naimark-implies-simple-lem} that $C^*(E)$ is simple, and since $E$ is row-countable,  Proposition~\ref{three-equivalent-Naimark-prop} implies that $C^*(E)$ is AF.  The result then follows from Theorem~\ref{Naimark-AF-thm}.
\end{proof}

\end{document}